\newcommand{\PreserveBackslash}[1]{\let\temp=\\#1\let\\=\temp}
\newcolumntype{C}[1]{>{\PreserveBackslash\centering}p{#1}}
\newcolumntype{R}[1]{>{\PreserveBackslash\raggedleft}p{#1}}
\newcolumntype{L}[1]{>{\PreserveBackslash\raggedright}p{#1}}
\newcounter{stepcounter}
\theoremstyle{plain}
\newtheorem{thm}{Theorem}[section]
\newtheorem{lem}[thm]{Lemma}
\newtheorem{prop}[thm]{Proposition}
\newtheorem{cor}[thm]{Corollary}
\theoremstyle{definition}
\newtheorem{eg}[thm]{Example}
\newtheorem{defn}[thm]{Definition}
\newtheorem{remark}[thm]{Remark}
\date{}
\newcommand\bit{\begin{itemize}}
\newcommand\eit{\end{itemize}}
\newcommand\bet{\begin{enumerate}}
\newcommand\eet{\end{enumerate}}
\newcommand\ed{\end{document}}
\DeclareFontFamily{U}{mathx}{\hyphenchar\font45}
\DeclareFontShape{U}{mathx}{m}{n}{
      <5> <6> <7> <8> <9> <10>
      <10.95> <12> <14.4> <17.28> <20.74> <24.88>
      mathx10
      }{}
\DeclareSymbolFont{mathx}{U}{mathx}{m}{n}
\DeclareMathAccent{\widecheck}{0}{mathx}{"71}
\DeclareMathAccent{\wideparen}{0}{mathx}{"75}
\newcommand{\e}{\varepsilon}
\newcommand\Om{\Omega}
\newcommand\del{\partial}
\newcommand\adel{\ol{\partial}}
\newcommand\DEL{\Delta}
\newcommand\bC{{\mathbb C}}
\newcommand\bR{{\mathbb R}}
\newcommand\EE{{\mathcal E}}
\newcommand\F{{\mathcal F}}
\newcommand\FF{{\mathcal F}}
\newcommand{\OO}{\mathcal{O}}
\def\CC{\mathbb{C}}
\newcommand\fg{{\mathfrak g}}
\newcommand\co{\mathrm{co}}
\newcommand\exd{\mathrm{d}}
\newcommand\haar{\mathrm{\bf h}}
\newcommand\unit{\mathrm{U}}
\newcommand\hw{\mathrm{hw}}
\newcommand\lw{\mathrm{lw}}
\newcommand\id{\mathrm{id}}
\newcommand\proj{\mathrm{proj}}
\newcommand\inv{^{-1}}
\newcommand\by{\times}
\newcommand\oby{\otimes}
\newcommand\wed{\wedge}
\newcommand\sseq{\subseteq}
\def\qbinom#1#2{\ensuremath{\left[\kern-.3em\left[\genfrac{}{}{0pt}{}{#1}{#2}\right]\kern-.3em\right]_q}}
\newcommand\ol{\overline}
\newcommand{\fl}{\mathfrak{l}}
\newcommand{\fc}{\mathfrak{c}}
\DeclareMathOperator{\trace}{tr}
\author[F. D\'iaz Garc\'ia]{Fredy D\'iaz Garc\'ia} 
\address{Mathematical Institute of Charles University, Sokolovsk\'a 83, Prague, Czech Republic}
\email{fdiaz@karlin.mff.cuni.cz}
\author[A. O. Krutov]{Andrey O. Krutov}
\address{Institute of Mathematics, Czech Academy of Sciences, \v{Z}itn\'a 25, 115 67 Prague, Czech Republic}  \email{krutov@math.cas.cz}
\author[R. \'O Buachalla]{R\'eamonn \'O Buachalla}
\address{Mathematical Institute of Charles University, Sokolovsk\'a 83, Prague, Czech Republic}
\email{obuachalla@karlin.mff.cuni.cz}
\author[P. Somberg]{Petr Somberg}
\address{Mathematical Institute of Charles University, Sokolovsk\'a 83, Prague, Czech Republic} \email{somberg@karlin.mff.cuni.cz}
\author[K. R. Strung]{Karen R. Strung}
\address{Institute of Mathematics, Czech Academy of Sciences, \v{Z}itn\'a 25, 115 67 Prague, Czech Republic} \email{strung@math.cas.cz}
\title[{Positive and Line Modules over $\OO_q(G/L_S)$ }]{\textbf{Positive Line Modules over  the irreducible quantum flag manifolds}}
\thanks{{\tiny FDG was partially funded by Conacyt (Consejo Nacional
de Ciencia y Tecnolog\'ia, M\'exico). AK was supported through the program ``Oberwolfach Leibniz Fellows'' by the Mathematisches Forschungsinstitut Oberwolfach in 2018 and by the QuantiXLie Centre of Excellence, a project cofinanced by the Croatian Government and European Union through the European Regional Development Fund---the Competitiveness and Cohesion Operational Programme (KK.01.1.1.01.0004). R\'OB acknowledges FNRS support through  a postdoctoral fellowship within the framework of the MIS Grant ``Antipode'' grant number F.4502.18. KRS was supported by a Radboud Excellence Initiative postdoctoral
fellowship, a Sonata 9 NCN grant 2015/17/D/ST1/02529. AK and KRS are currently funded by GA\v{C}R project 20-17488Y and RVO: 6798584. R\'OB and PS are partially supported by the grant GA\v{C}R $306-33/19
06357$. R\'OB and FDG are supported by the Charles University PRIMUS grant PRIMUS/21/SCI/026.}
}
\keywords{quantum groups, noncommutative geometry, quantum flag manifolds, complex geometry}
\subjclass[2010]{
  46L87, 
  81R60, 
  81R50, 
  17B37, 
  16T05}  
\begin{document}

\begin{abstract}
Noncommutative K\"ahler structures were recently introduced as a framework for studying noncommutative K\"ahler geometry on quantum homogeneous spaces. It was subsequently observed that the notion of a positive vector bundle directly generalises to this setting, as does the Kodaira vanishing theorem. In this paper, by restricting to covariant K\"ahler structures of irreducible type (those having an irreducible space of holomorphic $1$-forms) we provide simple cohomological criteria for positivity, allowing one to avoid explicit curvature calculations. These general results are  applied to our motivating family of examples, the irreducible  quantum flag manifolds $\OO_q(G/L_S)$. Building on the recently established noncommutative Borel--Weil theorem, every relative line module over $\OO_q(G/L_S)$ can be identified as positive, negative, or flat, and it is then concluded that each K\"ahler structure is of Fano type. 
\end{abstract}

\maketitle


\section{Introduction}
Positive line bundles, which is to say, line bundles whose Chern curvature is a positive definite $(1,1)$-form, play a central role in modern complex geometry. Analogously, ample line bundles are fundamental objects of study in projective algebraic geometry.  An ample line bundle is a line bundle $\EE$ such that, for any coherent sheaf $\mathcal{S}$ and sufficiently large $k$, the tensor product $\mathcal{S} \otimes \EE^{\otimes k}$ is generated by global sections. Under the GAGA (\emph{g\'eometrie alg\'ebrique et g\'eom\'etrie analytique}) correspondence \cite{SerreGAGA}, positive and ample line bundles are equivalent.  The existence of positive line bundles has many remarkable implications for the structure of a complex manifold.  For example, the Kodaira embedding theorem says that a compact K\"ahler manifold is projective if and only if it admits a positive line bundle \cite[\textsection~5.3]{HUY}. Positivity also has significant cohomological implications, as evidenced by the celebrated Kodaira vanishing theorem and the subsequent slew of related vanishing theorems  \cite{VTBOOK}. Given  these elegant results, the natural impulse is to try to extend the concept of positivity to settings beyond ordinary complex geometry. This has been met with tremendous success in the study of varieties over fields of prime characteristic. Positivity, or rather in this case ampleness, has been key to understanding innate differences between these geometries, for example the failure of the Kodaira vanishing theorem in prime characteristic \cite{MR541027}. Another striking extension has been to the setting of noncommutative projective algebraic geometry, where ample sequences and ample pairs are by now considered foundational structures \cite{ArtinZhang}.


The goal of this paper is to explore the idea of positivity for the noncommutative differential geometry of
quantum groups. In particular, we show that the relative line modules over an irreducible quantum flag
manifold endowed with its Heckenberger--Kolb calculus, are either positive, flat, or negative, 
\textbf{Theorems~\ref{thm:threeway}} and \textbf{\ref{thm:posnegLINEs}}.
Furthermore, we are able to distinguish between these three cases using cohomological information, \textbf{Corollary~\ref{cor:cohomology.pos.neg}}, in the form of the recently established noncommutative Akizuki--Nakano identities \cite[Corollary 7.8]{OSV}.

Positivity in noncommutative differential geometry is a concept that has been formulated only recently in the companion paper~\cite{OSV}. These two papers are part of a series exploring the noncommutative complex geometry of quantum homogeneous spaces \cite{CQHKS,DOKSS,MMF2,MMF3,OSV} based around the notion of a noncommutative K\"ahler structure, as introduced in \cite{MMF3}. In this context, the classical Koszul--Malgrange theorem~\cite{KoszulMalgrange}  allows for an obvious noncommutative generalisation of the definition of a holomorphic module. As in the classical setting, every Hermitian holomorphic module has a uniquely associated Chern connection \cite[Proposition 4.4]{BeggsMajidChern}.  In \cite{OSV} it is observed that the definition of a positive line bundle extends directly to the noncommutative setting, where we call them \emph{positive line modules}. Building on this observation,  a corresponding Kodaira vanishing theorem can be formulated  and the definition of a noncommutative K\"ahler structure can be refined to give the definition of a noncommutative Fano structure. As shown in \cite{OSV}, the implied vanishing of cohomologies for Fano structure makes it possible to calculate noncommutative holomorphic Euler characteristics.


Despite an abundance of structure, calculating the curvature of a line module in the quantum  setting remains an extremely challenging task: classical tools are either not yet developed or are unavailable entirely. Any attempt at brute force calculations quickly becomes prohibitively lengthy and tedious. The complications involved can already be seen for the example of the standard Podle\'s sphere as discussed in \textbf{Example \ref{PodlesCurvature}}. Fortunately, the worst of these calculations can be avoided entirely by restricting to a particularly tractable subclass, which subsumes the quantum projective spaces: those covariant K\"ahler structures which are irreducible.


Our motivating examples are the irreducible, or cominiscule, quantum flag manifolds $\OO_q(G/L_S)$. Forming a large and robust family, they are a natural class to consider when attempting to extend geometric notions from the classical to the noncommutative.  Indeed, it is becoming increasingly clear that the noncommutative geometry of the quantum flag manifolds is key to understanding the noncommutative geometry of quantum groups in general. Here, the necessary cohomological information is provided by the irreducible quantum flag manifold Borel--Weil theorem~\cite{IrrBW, KMOS}.  This allows us to prove in \textbf{Theorem~\ref{thm:HKFano}} that every irreducible quantum flag manifold, endowed with its Heckenberger--Kolb calculus, is of \emph{Fano type} in the sense of \cite[Definition 8.8]{OSV}.


The positivity results established in this paper have a number of important applications in other works. In \cite{OSV}  positivity, along with the  noncommutative Kodaira vanishing theorem, is used to prove a noncommutative generalisation of the Bott--Borel--Weil theorem for positive line modules.  In \cite{SpectralGap} positivity is used to prove a spectral gap for the negatively twisted Dolbeault--Dirac operators  over the irreducible quantum flag manifolds. This in turn allows for a proof that the closure of each such operator is Fredholm \cite{CQHKS}. This gives a particularly satisfying application of the machinery of classical complex geometry to the quantum world, showing how the spectrum of a noncommutative Dirac operator is shaped by the geometry of the underlying Heckenberger--Kolb $q$-deformed de Rham complex.


\subsection{Summary of the Paper}

The paper is organised as follows. In \textsection\ref{section:Prelims} we recall necessary background material, including noncommutative  K\"ahler structures, Hermitian and holomorphic modules, and compact quantum group algebras. In particular, we recall the recently introduced notion of a compact quantum homogeneous (CQH) K\"ahler space \cite[Definition 3.1]{CQHKS}, which details a natural set of compatibility conditions between covariant K\"ahler structures and compact quantum group algebras.

In  \textsection\ref{section:IrreducibleCQHHermitianSpaces} we develop the general theory of the paper. In particular, we introduce the notion of an irreducible CQH-K\"ahler space, and show that for any such space, a covariant Hermitian holomorphic line module is either positive, negative, or flat. We then build upon this result to show that we can distinguish between these three choices by examining the degree zero Dolbeault cohomology of the line module in question.

In \textsection\ref{section:DJ}  we present our  motivating family of examples, the irreducible quantum flag manifolds $\OO_q(G/L_S)$, their relative line modules $\EE_l$, for $l \in \mathbb{Z}$, along with their Heckenberger--Kolb calculi. We recall the irreducible CQH-K\"ahler structure of each $\OO_q(G/L_S)$, and the associated  noncommutative generalisation of the Borel--Weil theorem. We then build upon the general theory presented in \textsection3, and, for each $k \in \mathbb{Z}_{>0}$, prove that $\EE_k > 0$, and  $\EE_{-k} < 0$. As a consequence, we observe that the K\"ahler structure of the Heckenberger--Kolb calculus is a Fano structure.

For the reader's convenience we also include three short appendices. Appendix~\ref{app:A} presents Takeuchi's equivalence, the natural setting for discussing  homogeneous vector bundles in the noncommutative setting. Appendix \ref{app:Levi} contains a discussion of the definition of the quantum Levi group $\OO_q(L_S)$. Finally, Appendix~\ref{app:B} provides necessary technical details about the quantum flag manifolds and their canonical modules.

\subsubsection*{Acknowledgments} Part of this work was carried out when R\'OB and AK visited KRS at the Institute of Mathematics, Astrophysics and Particle Physics at Radboud University, and we thank the institute for their support. The authors thank the Universit\'e libre de Bruxelles for hosting FDG during September and October 2019.  KRS and R\'OB are also grateful for a visit to Mathematisches Forschungsintitut Oberwolfach in December 2018 where AK was a Leibniz fellow, and to the Mathematics Department at the University of Zagreb during July 2019.  All five authors benefited from meeting at the conference ``Quantum Flag Manifolds in Prague'' at the Charles University in September 2019. We also thank Matthias Fischmann, Vincent Grandjean, Dimitry Leites, Paolo Saracco, Jan \v{S}\v{t}ov\'i\v{c}ek,  Adam-Christaan van Roosmalen, and Elmar Wagner for helpful discussions


\section{Preliminaries} \label{section:Prelims}

We recall the basic definitions and results for differential calculi, complex structures, Hermitian structures, and K\"ahler structures, as well as noncommutative vector bundles over these objects.  For a more detailed introduction see \cite{MMF2,MMF3}, and references therein. For an excellent presentation of classical complex and K\"ahler geometry see \cite{HUY}. 

Throughout the paper all algebras are assumed to be unital and defined over  $\mathbb{C}$, and  all unadorned tensor products are defined over $\mathbb{C}$. We denote $\mathbf{i} := \sqrt{-1}$.

\subsection{Differential Calculi}

A {\em differential calculus}  $\big(\Om^\bullet \simeq \bigoplus_{k \in \mathbb{Z}_{\geq 0}} \Om^k, \exd\big)$ is a differential graded algebra which is generated as an algebra by the elements $a, \exd b$, for $a,b \in \Om^0$. We call an element $\omega \in \Omega^{\bullet}$ a \emph{form}, and if $\omega \in \Omega^k$, for some $k \in \mathbb{Z}_{> 0}$, then $\omega$ is said to be \emph{homogeneous} of degree $|\omega| := k$.  The product of two forms $\omega,\nu \in \Omega^{\bullet}$ is usually denoted by $\omega \wedge \nu$, unless one of the forms is of degree $0$, whereupon the product is denoted by juxtaposition. A \emph{differential calculus over} an algebra $B$ is a differential calculus such that $\Om^0 = B$. A differential calculus is said to have  \emph{total degree} $m \in \mathbb{Z}_{\geq 0}$, if $\Omega^m \neq 0$, and $\Om^k = 0$, for every $k > m$. 

A {\em differential $*$-calculus} over a $*$-algebra $B$ is a differential calculus over $B$ such that the \mbox{$*$-map} of $B$ extends to a (necessarily unique) conjugate-linear involutive map $*:\Om^\bullet \to \Om^\bullet$ satisfying $\exd(\omega^*) = (\exd \omega)^*$, and 
\begin{align*}
\big(\omega \wed \nu\big)^*  =  (-1)^{kl} \nu^* \wed \omega^*, &  & \text{ for all } \omega \in \Om^k, \, \nu \in \Om^l. 
\end{align*}
We say that $\omega \in \Omega^\bullet$ is \emph{closed} if $\exd \omega = 0$, and \emph{real} if $\omega^*= \omega$. For further details in differential calculi we refer to \cite{BeggsMajid:Leabh}.

\subsection{Complex Structures}

We now recall the definition of a complex structure as introduced in \cite{BS,KLvSPodles}. This generalises the properties of the de Rham complex of a classical complex manifold \cite[\textsection 2.6]{HUY}, and was motivating  by the pioneering examples of the Podle\'s calculus of standard Podle\'s sphere as described in \cite{Maj}, and more generally the Heckenberger--Kolb calculi \cite{HKdR}.

\begin{defn}
A {\em complex structure} $\Om^{(\bullet,\bullet)}$ for a differential $*$-calculus $(\Om^{\bullet},\exd)$ is a choice of \mbox{$\mathbb{Z}_{\geq 0}^2$-algebra} grading $\bigoplus_{(a,b)\in \mathbb{Z}_{\geq 0}^2} \Om^{(a,b)}$ for $\Om^{\bullet}$ such that
\begin{align*}
1. ~ \Om^k = \bigoplus_{a+b = k} \Om^{(a,b)}, & & 2. ~
\big(\Om^{(a,b)}\big)^* = \Om^{(b,a)}, & & 3. ~ \exd \Om^{(a,b)} \subseteq \Omega^{(a+1,b)} \oplus \Omega^{(a,b+1)},
\end{align*}
for all $k \in \mathbb{Z}_{\geq 0}$, and $(a,b) \in \mathbb{Z}^{2}_{\geq 0}$. We call an element of $\Om^{(a,b)}$ an $(a,b)$-form.
\end{defn}

Note that for any complex structure there is a degree $(1,0)$-map $\del$, and a degree \mbox{$(0,1)$-map} $\adel$, uniquely defined by
$
\exd = \del + \adel.
$
The triple 
$
(\Om^{(\bullet,\bullet)}, \del,\ol{\del})
$
forms a double complex, which we call the \emph{Dolbeault double complex}. Moreover, both $\del$ and $\adel$ satisfy the graded Leibniz rule, and $\del(\omega^*) = (\adel \omega)^*$, for all $\omega \in \Omega^{\bullet}$. 

We say that a complex structure $\Om^{(\bullet,\bullet)}$ is {\em factorisable} if $B$-bimodule isomorphisms
\begin{align*}
1. ~ \Om^{(a,0)} \oby_B \Om^{(0,b)} \simeq \Om^{(a,b)}, & & 2. ~ \Om^{(0,b)} \oby_B \Om^{(a,0)} \simeq \Om^{(a,b)}, & & \textrm{ for all } (a,b) \in \mathbb{Z}^2_{\geq 0},
\end{align*}
are given by the multiplication map of $\Omega^{\bullet}$. Recall that every complex manifold is automatically factorisable \cite[\textsection1.2]{HUY}, as are the Heckenberger--Kolb calculi of the irreducible quantum flag manifolds (see  \textsection\ref{subsect:hk}).

\subsection{Hermitian and K\"ahler Structures} \label{subsection:HandKS}

In this subsection we recall the general theory of Hermitian and K\"ahler structures as introduced in \cite{MMF3}. 

\begin{defn} Let $\Om^{\bullet}$  be a differential $*$-calculus over a $*$-algebra $B$, of even total degree $2n$. An {\em Hermitian structure} for $\Om^{\bullet}$  is a pair $(\Om^{(\bullet,\bullet)}, \sigma)$  consisting of a complex structure  $\Om^{(\bullet,\bullet)}$ and  a central real $(1,1)$-form $\sigma$, called the {\em Hermitian form}, such that, for the {\em Lefschetz operator} $L:\Om^\bullet \to \Om^\bullet$, defined by $L(\omega) := \sigma \wedge \omega$, isomorphisms are given by
\begin{align*}
L^{n-k}: \Om^{k} \to  \Om^{2n-k}, & & \text{ for all } k = 0, \dots, n-1.
\end{align*}
A \emph{K\"ahler structure} is an Hermitian structure such that the Hermitian form is closed, in which case we call it a {\em K\"ahler form}.
\end{defn} 
The \emph{$(a,b)$-primitive forms} of an Hermitian structure are
\begin{align*}
P^{(a,b)} : = \begin{cases} 
      \{\alpha \in \Om^{(a,b)} \,|\, L^{n-a-b+1}(\alpha) = 0\}, &  \text{ ~ if } a+b \leq n,\\
      0, & \text{ ~ if } a+b > n,
   \end{cases}
\end{align*} 
and we write $P^k := \bigoplus_{a+b = k} P^{(a,b)}$. 
The existence of an Hermitian structure implies  a direct generalisation of the classical Lefschetz decomposition \cite[Proposition 4.3]{MMF3}:
\begin{align*}
\Om^{k} \simeq \bigoplus_{j \geq 0} L^j\big(P^{k-2j}\big).
\end{align*}
In classical Hermitian geometry, the Hodge map of an Hermitian metric is related to the associated Lefschetz decomposition through the well-known Weil formula \cite[Proposition 1.2.31]{HUY}. In the noncommutative setting we take the Weil formula for our definition of the Hodge map: The {\em Hodge map} associated to an Hermitian structure $\big(\Om^{(\bullet,\bullet)},\sigma \big)$ is the $B$-bimodule map $\ast_{\sigma}: \Omega^\bullet \to \Omega^\bullet$ satisfying, for any $j \in \mathbb{Z}_{\geq 0}$,
\begin{align*}
\ast_{\sigma}\big(L^j(\omega)\big) = (-1)^{\frac{k(k+1)}{2}}\mathbf{i}^{a-b}\frac{j!}{(n-j-k)!}L^{n-j-k}(\omega), & & \textrm{ for } \omega \in P^{(a,b)}, \, a+b = k.
\end{align*}
The Hodge map allows us to construct a sesquilinear map (conjugate in the second variable) called the \emph{Hermitian metric}, 
\begin{align*}
g_{\sigma}: \Omega^{\bullet} \times \Omega^{\bullet} \to B, & & 
(\omega, \nu) \mapsto \left\{ \begin{array}{cc} *_{\sigma} (\omega \wedge *_\sigma(\nu^*)), &\text{ if } k=l,\\
0, & \text{ if } k\neq l. \end{array} \right. 
\end{align*}
It follows from \cite[Proposition 5.6]{OSV} that
\[ 
g_{\sigma}(\omega, \nu) =  g_{\sigma}(\nu, \omega)^*, \qquad \text{ for all } \omega, \nu \in \Omega^{\bullet}.
\] 
With respect to the Hermitian metric, the Lefschetz map $L$ is adjointable, and we denote its adjoint by $L^{\dagger} = \Lambda$. The map $\Lambda$ can be explicitly presented as
\[ 
\Lambda = *_\sigma^{-1} \circ L \circ *_{\sigma}.
\]
We define the \emph{counting operator} $H : \Omega^\bullet \to \Omega^\bullet$ by $H(\omega) := (k-n) \omega$, for $\omega \in \Omega^k$. Together the maps $L$, $\Lambda$, and $H$ give a representation of $\mathfrak{sl}_2$. (See \textsection \ref{subsection:AkiZNak} for the general twisted version of this representation.)


\subsection{Holomorphic Modules}

In this subsection we present the notion of a noncommutative holomorphic module, as  has been considered in various places, for example \cite{BS}, \cite{PolishSch}, and \cite{KLvSPodles}.  Motivated by the Serre--Swan theorem, we consider projective modules as noncommutative analogues of vector bundles. In particular, a \emph{line module} over $B$ will be an invertible $B$-bimodule $\EE$, where \emph{invertible} means that there exists another $B$-bimodule ${}^{\vee}\EE$ such that 
$
\EE \otimes_B {}^{\vee}\EE \simeq {}^{\vee}\EE \otimes_B \EE \simeq B.
$
Note that any such $\EE$ is automatically projective as a left (and as a right) $B$-module.

We can define a noncommutative analogue of a holomorphic vector bundle via the classical Koszul--Malgrange characterisation of holomorphic bundles \cite{KoszulMalgrange}. (See \cite{OSV} for a more detailed discussion.) For $\Omega^\bullet$ a differential calculus over an algebra $B$, and $\mathcal{F}$ a left $B$-module, a \emph{connection} on $\FF$ is a $\mathbb{C}$-linear map $\nabla:\mathcal{F} \to \Omega^1 \otimes_B \FF$ satisfying 
\begin{align*}
\nabla(bf) = \exd b \otimes f + b \nabla f, & & \textrm{ for all } b \in B, f \in \FF.
\end{align*}
With respect to a choice $\Omega^{(\bullet,\bullet)}$ of complex structure on $\Omega^{\bullet}$, a \emph{$(0,1)$-connection for $\mathcal{F}$} is a connection with respect to the differential calculus $(\Omega^{(0,\bullet)},\adel)$.

Any connection can be extended to a map $\nabla: \Omega^\bullet \otimes_B \mathcal{F} \to   \Omega^\bullet \otimes_B \mathcal{F}$ uniquely defined by
\begin{align*}
\nabla(\omega \otimes f) =   \exd \omega \otimes f + (-1)^{|\omega|} \, \omega \wedge \nabla f, & & \textrm{for } f \in \FF, \, \omega \in \Omega^{\bullet},
\end{align*}
for a homogeneous form $\omega$ with degree  $|\omega|$. The \emph{curvature} of a connection is the left $B$-module map $\nabla^2: \mathcal{F} \to \Omega^2 \otimes_B \mathcal{F}$. A connection is said to be {\em flat} if $\nabla^2 = 0$. Since $\nabla^2(\omega \otimes f) = \omega \wedge \nabla^2(f)$, a connection is flat if and only if  the pair $(\Omega^\bullet \otimes_B \FF, \nabla)$ is a complex. 

\begin{defn}
Fix a differential $*$-calculus $\Omega^{\bullet}$, over a $*$-algebra $B$, endowed with a choice of complex structure $\Omega^{(\bullet,\bullet)}$. A \emph{holomorphic module over $B$} is then a pair $(\mathcal{F},\adel_{\mathcal{F}})$, where  $\mathcal{F}$ is a finitely generated projective left $B$-module, and the map $\adel_{\mathcal{F}}: \mathcal{F} \to \Omega^{(0,1)} \otimes_B \mathcal{F}$ is a flat $(0,1)$-connection, which we call the \emph{holomorphic structure} for $(\FF, \adel_{\FF})$. 
\end{defn}

Note that for any fixed $a \in \mathbb{Z}_{\geq 0}$, a holomorphic module $(\FF,\adel_{\FF})$ has an associated complex
$$
\adel_{\FF}: \Omega^{(a,\bullet)} \otimes_B \FF  \to \Omega^{(a,\bullet)} \otimes_B \FF.
$$
For any $b \in \mathbb{Z}_{\geq 0}$, we denote by $H^{(a,b)}_{\adel}(\FF)$ the $b^{\mathrm{th}}$-cohomology group of this complex.


\subsection{Holomorphic Hermitian Modules} \label{subsection:HVBS}

In this subsection, we assume that $B$ is a \mbox{$*$-algebra}, and generalise to our noncommutative setting the classical notion of a holomorphic Hermitian vector bundle. 

For any left $B$-module, denote by $^\vee\!\FF := \mathrm{Hom}_B(\FF, B)$ the dual module, which is a right $B$-module with respect to pointwise multiplication 
\begin{align*}
\phi b (f) := \phi(f) b, & & \phi \in \! \, ^\vee\!\FF, & & \text{for } b \in B,  f \in \FF. 
\end{align*}  
Moreover, we denote by $\overline{\FF}$ the \emph{conjugate right $B$-module} of $\FF$, as defined by the action
\begin{align*}
\overline{\FF} \otimes B \to \overline{\FF}, & & \overline{f} \otimes b \mapsto \overline{b^*f}. 
\end{align*}
For a $*$-algebra $B$, the \emph{cone of positive elements} $B_{\geq0}$ is defined by
\[ 
B_{\geq  0} := \left\{\sum_{1 \leq i \leq l}  b_i^* b_i  \mid b_i \in B, \, l \in \mathbb{Z}_{\geq 0} \right\}\!.
\]
We denote the non-zero positive elements of $B$ by $B_{>0} := B_{\geq 0} \setminus \{0\}$. 

\begin{defn} \label{defn:Hermitianmetric} 
An \emph{Hermitian module} over a $*$-algebra $B$ is a pair $(\F,h_{\F})$, where $\F$ is a finitely generated projective left $B$-module and $h_{\F}:\overline{\F} \to {}^{\vee}\!\F$ is a right $B$-module isomorphism, such that, for  the associated sesquilinear pairing, 
\begin{align*}
h_{\F}(-,-): \F \times \F \to B, & & (f,k) \mapsto h_{\F}(\overline{k})(f)
\end{align*}
it holds  that, for all non-zero $f,k \in \F$,
\begin{align*}
1. ~ h_{\F}(f,k) = h_{\F}(k,f)^*, & &  2. ~ h_{\F}(f,f) \in B_{>0}.
\end{align*}
\end{defn}

\begin{eg} \label{eg:HermModCalc}
Let $(\Omega^{(\bullet,\bullet)}, \sigma)$ be an Hermitian structure for a differential calculus $\Omega^{\bullet}(B)$. If $\Omega^\bullet$ is finitely generated and projective as a left $B$-module, and  we assume that  
\begin{align*}
g_{\sigma}(\omega, \omega) \in B_{>  0}, &&  \textrm{ for all non-zero } \omega \in \Omega^{\bullet},
\end{align*}
then the pair $(\Omega^{\bullet},g_{\sigma})$ is an Hermitian module. In what follows, when we say that the pair $(\Omega^{\bullet},g_{\sigma})$ associated to an Hermitian structure is an Hermitian module, we mean it in this sense.
\end{eg}

Let $(\FF,h_{\FF})$ be an Hermitian module, and consider the sesquilinear map
\begin{align*}
\mathfrak{h}_{\FF}: \Om^{\bullet} \otimes_B \F \times \Om^{\bullet} \otimes_B \F \to \Om^{\bullet}, & & (\omega \otimes f, \nu \otimes g)  \mapsto  \omega h_\F(f,g) \wedge \nu^*.
\end{align*}
A connection $\nabla\colon \F \to \Om^1 \otimes_B\F$ is \emph{Hermitian} if
\begin{align*}
\exd {\mathfrak{h}}_\mathcal{F}(f, g) = \mathfrak{h}_\F(\nabla(f), 1 \otimes {g}) + \mathfrak{h}_\F(1 \otimes  f,  {\nabla}({g})), && \textrm{ for all } f,g \in \F.
\end{align*}

\begin{defn}
A \emph{holomorphic Hermitian module} is a triple $(\mathcal{F},h_\F,\adel_{\mathcal{F}})$ such that $(\F, h_\F)$ is an Hermitian module and $(\F, \adel_{\mathcal{F}})$ is a holomorphic module.
\end{defn}
As established in \cite{BeggsMajidChern} (see also \cite{OSV}), for any  Hermitian holomorphic module $(\mathcal{F},h_\F,\adel_{\mathcal{F}})$, there exists a unique Hermitian connection $\nabla:\mathcal{F} \to  \Omega^1 \otimes_A \mathcal{F}$ satisfying
\begin{align*}
 \adel_{\F} = \big(\mathrm{proj}_{\Omega^{(0,1)}} \otimes_B \id\big) \circ  \nabla, 
\end{align*}
where $\proj_{\Omega^{(0,1)}}:\Omega^1 \to \Omega^{(0,1)}$ is the obvious projection. We call $\nabla$ the \emph{Chern connection} of $(\mathcal{F},h_\F, \adel_{\mathcal{F}})$, and denote
\begin{align*}
\del_{\F} := \big(\mathrm{proj}_{\Omega^{(1,0)}} \otimes_B \id\big) \circ  \nabla,
\end{align*}
where $\proj_{\Omega^{(1,0)}}:\Omega^1 \to \Omega^{(1,0)}$ is the obvious projection.

We finish this subsection with the notion of positivity for a holomorphic Hermitian module, motivated by the classical notion of positivity \cite[Proposition 5.3.1]{HUY}. It was first introduced in \cite[Definition 8.2]{OSV} and details a compatibility between Hermitian holomorphic modules and K\"ahler structures.

\begin{defn}\label{defn:positiveNegativeVB}
Let $\Omega^{\bullet}$ be a differential calculus over a $*$-algebra $B$, and let $(\Omega^{(\bullet,\bullet)},\kappa)$ be a K\"ahler structure for $\Omega^{\bullet}$. 
An Hermitian  holomorphic module $(\mathcal{F},h_{\F}, \adel_{\mathcal{F}})$  is said to be \emph{positive}, written $\F > 0$, if there exists a $\theta \in \mathbb{R}_{>0}$, such that the Chern connection $\nabla$ satisfies
\begin{align*}
\nabla^2(f) = -\theta \mathbf{i} \kappa \otimes f, & & \textrm{ for all } f \in \mathcal{F}.
\end{align*} 
Analogously, $(\mathcal{F}, h_{\F}, \adel_{\mathcal{F}})$ is said to be {\em negative}, written $\F <0$, if there exists a $\theta \in \mathbb{R}_{>0}$, such that the Chern connection $\nabla$ of ${\mathcal{F}}$ satisfies
\begin{align*}
\nabla^2(f) =  \theta \mathbf{i} \kappa \otimes f, & & \textrm{ for all } f \in \mathcal{F}.
\end{align*} 
\end{defn}

An important point to note is that for any factorisable complex structure $\Omega^{(\bullet,\bullet)}$ of total degree $2n$, the pair $\left(\Omega^{(n,0)},\adel\right)$ is a holomorphic module. Moreover, for a \emph{factorisable Hermitian structure}, or \emph{factorisable K\"ahler structure}, which is to say an Hermitian, or K\"ahler, structure whose constituent complex structure is factorisable, the triple $\left(\Omega^{(n,0)}, g_{\sigma},\wedge^{-1} \circ \adel\right)$ is an  Hermitian holomorphic module.

\subsection{Some Identities} \label{subsection:AkiZNak}

Suppose that $(\Omega^{(\bullet, \bullet)}, \sigma)$ is an Hermitian structure for a differential $*$-calculus over a $*$-algebra $B$, and that $\F$ is a left $B$-module. Consider the triple of operators on $\Omega^{\bullet} \otimes_{B} \F$:
\begin{align*}
L_\F := L \otimes \id_\F, & & H_F := H \otimes \id_\F, & &  \Lambda_\F := \Lambda \otimes \id_\F. 
\end{align*}
As established in \cite{OSV}, it holds that
\begin{align*}
& [H_\F, L_\F ] = 2L_\F,  \qquad [L_\F, \Lambda_\F ] = H_\F, \qquad  [H_\F, \Lambda_\F] = -2 \Lambda_\F,
\end{align*}
meaning that we have a representation of $\mathfrak{sl}_2$.


\section{Irreducible CQH-Hermitian Spaces and Positive Line Modules} \label{section:IrreducibleCQHHermitianSpaces}

Determining positivity, or negativity, of an Hermitian holomorphic line module ostensibly requires one to calculate the Chern curvature explicitly. In practice, this can prove to be a very challenging technical task. This is true in the classical setting, and even more so in the noncommutative world, as can be seen for the simplest case of the standard Podle\'s sphere as presented in Example~\ref{PodlesCurvature}. However, as we demonstrate in this section, by imposing an irreducibility condition on our CQH-Hermitian structure, vanishing, and non-vanishing, of  zeroth cohomology groups can be used to conclude positivity. 

\subsection{Covariant Hermitian Structures} \label{subsect:covHERMstructure}

 Suppose that $A$ is a Hopf algebra and $B$ is a left $A$-comodule algebra.  A differential calculus $\Omega^\bullet$ over $B$ is said to be \emph{covariant} if the coaction $\Delta_L : B \to A \otimes B$ extends to a (necessarily unique) map $\Delta_L : \Omega^\bullet \to A \otimes \Omega^\bullet$ giving $\Omega^\bullet$ the structure of an $A$-comodule algebra, and such that $\exd$ is a left $A$-comodule map.
 
A complex structure for $\Omega^\bullet$ is said to be \emph{covariant} if the \mbox{$\mathbb{Z}^2_{\geq 0}$-decomposition} of $\Omega^{\bullet}$ is a decomposition in the category of left \mbox{$A$-comodules} $^A \text{Mod}$, which is to say, $\Omega^{(a,b)}$ is a left $A$-sub-comodule of $\Omega^\bullet$, for each $(a, b) \in \mathbb{Z}^2_{\geq 0}$. This implies that $\del$ and $\adel$ are left $A$-comodule maps.  

If $(\Omega^{(\bullet, \bullet)}, \sigma)$  is an Hermitian structure for $\Omega^\bullet$ such that $\Omega^{(\bullet, \bullet)}$ is a covariant complex structure and $\sigma$ is left $A$-covariant, that is, $\Delta_L(\sigma) = 1 \otimes \sigma$, then we say that   $(\Omega^{(\bullet, \bullet)}, \sigma)$ is a \emph{covariant Hermitian structure}. In this case, $L$, $*_\sigma$ and $\Lambda$ are also left $A$-comodule maps.  A \emph{covariant K\"ahler structure} is a covariant Hermitian structure which is moreover a K\"ahler structure.

We also require the notion of covariance for holomorphic modules, and for simplicity restrict to the setting of a quantum homogeneous space $B = A^{\co(H)}$ (see Appendix \ref{app:A}). A \emph{holomorphic relative Hopf module} is a holomorphic module $(\F, \adel_\F)$ over $B$ such that $\F$ is an object in $^A_B \text{mod}_0$ (see Appendix~\ref{app:TAK}) and $\adel_\F : \F \to \Omega^{(0,1)} \otimes_B \F$ is a left $A$-comodule map.

An \emph{Hermitian relative Hopf module} is an Hermitian module $(\F, h_\F)$ such that $\F$ is an object in $^A_B \text{mod}_0$ and such that the map
\begin{align*}
\overline{\F} \to \, ^{\vee}\!\F, & & \overline{f} \mapsto h_\F( \cdot, f)
\end{align*}
is a morphism in $^A_B\mathrm{mod}_0$, where the conjugate $\overline{\F}$ and dual ${}^{\vee}\!\F$ modules are understood as objects in $^A_B\mathrm{mod}_0$ in the sense of Appendix \ref{app:A}. Note that for any simple object $\F$, its conjugate $\overline{\F}$ and its dual ${}^{\vee}\!\F$ will again be simple, implying that  any covariant Hermitian structures will be unique up to positive scalar multiple.

If $(\F, h_\F, \del_F)$ is an Hermitian holomorphic relative Hopf module, then the Chern connection is always a left $A$-comodule map, see \cite[\textsection7.1]{OSV}. Finally, an \emph{Hermitian}, or \emph{holomorphic, relative line module}, is an Hermitian, or holomorphic, relative Hopf module $\EE$ such that $\dim(\Phi(\EE)) = 1$, where $\Phi$ is the functor from Takeuchi's equivalence given in Appendix~\ref{app:TAK}. 

\begin{remark} \label{remark:FibrewisePD}
In the relative Hopf module case, the  differential calculus is automatically finitely generated and projective. Hence, as discussed in Example \ref{eg:HermModCalc} we automatically get an Hermitian module.

In the commutative case, the induced sesquilinear pairing on the fibre of an  Hermitian relative Hopf module $(\F,h_{\F})$ is positive definite if and only if 
\begin{align} \label{eqn:FibrewisePD}
\e(h_{\F}(f,f)) \neq 0, & & \textrm{ for all non-zero } f \in \F.
\end{align}
Indeed, in the noncommutative setting, Takeuchi's equivalence allows us to make sense of fibrewise positive definiteness for an Hermitian relative Hopf module $(\F,h_{\F})$. From the discussion of \cite[\textsection 5.2]{MMF3}, it is clear that $h_{\F}$ is fibrewise positive definite if and only if \eqref{eqn:FibrewisePD} holds.
\end{remark}


\subsection{CQH-Hermitian and CQH-K\"ahler Spaces}

We recall the definition of a compact quantum group algebra  \cite{KoornDijk}, the algebraic counterpart of Woronowicz's $C^*$-algebraic notion of a compact quantum group \cite{WoroCQPGs}.  A \emph{cosemisimple} Hopf algebra is a Hopf algebra endowed with a (necessarily unique) linear map $\mathbf{h} : A \to \mathbb{C}$, which we call the \emph{Haar functional}, satisfying $\mathbf{h}(1) = 1$, and
$$ 
(\id \otimes \mathbf{h}) \circ \Delta(a) = \mathbf{h}(a) 1, \qquad (\mathbf{h} \otimes \id) \circ \Delta(a) = \mathbf{h}(a) 1.
$$
A {\em compact quantum group algebra}, or a {\em CQGA}, is a cosemisimple Hopf $*$-algebra $A$  such that  $\haar(a^*a) > 0$, for all non-zero $a \in A$. A {\em CQGA-homogeneous space}  is a quantum homogeneous space such that  $A$ and $H$ are both CQGAs and $\pi : A \to H$ is a $*$-algebra map.

Let $(\Omega^{(\bullet,\bullet)}, \sigma)$ be an Hermitian structure of total degree $2n$, for $n \in \mathbb{Z}_{>0}$. The linear map
\begin{align*}
\int := \haar \circ \ast_{\sigma}: \Omega^{2n} \to \mathbb{C}
\end{align*}
is called the \emph{integral}. If $\int  \mathrm{d}  \omega = 0$, for all $\omega \in \Omega^{2n-1}$, then $(\Omega^{(\bullet,\bullet)}, \sigma)$ is said to be $\int$-closed.  Note that this is a special case of an orientable differential calculus with closed integral \cite[\textsection3.2]{MMF3}, where the Hodge map is taken as the orientation, and it generalises the classical situation of a manifold without boundary.

\begin{defn}
A \emph{compact quantum  homogeneous Hermitian space}, or simply a  \emph{CQH-Hermitian space}, is a quadruple $\mathbf{H} := \big(B = A^{\co(H)}, \Omega^\bullet, \Omega^{(\bullet,\bullet)},\sigma \big)$  where
\bet

\item $B = A^{\mathrm{co} (H)}$  is a CQGA-homogeneous space,

\item $\Omega^\bullet$ is a left $A$-covariant differential $*$-calculus over $B$, and an object in  $^A_B \mathrm{mod}_0$, 

\item $\big(\Omega^{(\bullet,\bullet)},\sigma\big)$ is a covariant  $\int$-closed Hermitian structure such that the pair $(\Omega^{\bullet},g_{\sigma})$ is an Hermitian module.

\eet
We denote by $\dim(\mathbf{H})$ the total degree of the constituent differential calculus $\Om^\bullet$, and call it the {\em dimension} of $\mathbf{H}$.  A CQH-K\"ahler space $\mathbf{K} := \big(B, \Omega^\bullet, \Omega^{(\bullet,\bullet)},\kappa \big)$ is a CQH-Hermitian space such that $(\Omega^{(\bullet, \bullet)}, \kappa)$ is a K\"ahler structure. 
\end{defn}

Over any CQH-Hermitian space $\mathbf{H} := \big(B = A^{\co(H)} , \Omega^\bullet, \Omega^{(\bullet,\bullet)},\sigma \big)$, every $\F \in \, ^A _B \text{mod}_0$ admits an Hermitian structure.  For a given  Hermitian module $(\mathcal{F}, h_{\F}, \adel_{\F})$ over $B$, an inner product is given by 
\begin{equation*} 
\langle \cdot, \cdot \rangle_{\F} : \F \by \F \to \mathbb{C}, \qquad (f,g) \mapsto \mathbf{h}\! \left(h_{\F}(f, g)\right)\!.
\end{equation*}
In particular, for the Hermitian module $\Omega^{\bullet} \otimes_B \F$, we have the inner product
\begin{equation} \label{OmegaF:inner product} 
\langle \cdot, \cdot \rangle_{\sigma, \F} : \Omega^{\bullet} \otimes_B \F \by \Omega^{\bullet} \otimes_B \F \to \mathbb{C}, \qquad (\omega \otimes f, \nu \otimes g) \mapsto \mathbf{h} \circ g_{\sigma}(\omega h_{\F}(f,g),\nu).
\end{equation}

We  denote by  $\del^{\dagger}_{\F}$, and $\adel^{\dagger}_{\F}$ the adjoint operators of $\del_{\F}$  and $\adel_{\F}$, respectively. That $\del$ and $\adel$ are adjointable with respect to this inner product is a consequence of the fact that they are covariant \cite[Proposition 5.15]{OSV}. We refer to any such operator as a {\em codifferential}.  Just as in the classical case \cite[\textsection4.1]{HUY}, each noncommutative codifferential admits a description in terms of the Hodge map \cite[Proposition 5.15]{OSV}. Since such formulae will not be needed in what follows, we recall only the case where $\F = B$, as originally established in  \cite[\textsection5.3.3]{MMF3}:
\begin{align}   \label{eqn:dagger}
\del^{\dagger} = - *_{\sigma} \circ \, \adel \circ *_{\sigma}, & & \adel^{\dagger} = - *_{\sigma} \circ\, \del \circ *_{\sigma}. 
\end{align}

The holomorphic, and anti-holomorphic, \emph{Laplace} operators on $(\F, \adel_\F)$ are defined  respectively  by 
 \begin{align*} 
 \Delta_{\adel_{\F}} :=  \adel^{\dagger}_{\F} \adel_\F + \adel_\F \adel^{\dagger}_\F, & &  \Delta_{\del_{\F}} :=  \del^{\dagger}_{\F} \del_\F + \del_\F \del^{\dagger}_\F.
  \end{align*}
As established in \cite[Corollary 7.8]{OSV}, the classical relationship between the Laplacians $\DEL_{\del_{\F}}$ and $\DEL_{\adel_{\F}}$ carries over to the noncommutative setting. Explicitly, the \emph{Akizuki--Nakano identity}
\begin{align} \label{eqn:ANid}
\DEL_{\adel_{\F}} = \DEL_{\del_{\F}} + [\mathbf{i} \nabla^2, \Lambda_\F].
\end{align}
carries over to the noncommutative setting.
The \emph{harmonic elements} of $(\F, \adel_\F)$ are defined  by 
 \begin{align*} 
 \mathcal{H}^{\bullet}_{\adel}(\F) := \ker(\Delta_{\adel_\F}).
  \end{align*}
The following noncommutative generalisation of classical Hodge decomposition was established in \cite[Theorem 6.4]{OSV}: Let $(\F, h, \adel_\F)$ be an Hermitian holomorphic module over a CQH-Hermitian space $(B, \Omega^{\bullet}, \Omega^{(\bullet, \bullet)}, \sigma)$. Then an orthogonal decomposition of \mbox{$A$-comodules} with respect to the Hermitian metric is given by 
$$
\Omega^{(0, \bullet)} \otimes_B \F= \mathcal{H}^{(0,\bullet)}_{\adel}(\F) \oplus \adel_\F( \Omega^{(0, \bullet)} \otimes_B \F) \oplus \adel_\F^\dagger( \Omega^{(0, \bullet)} \otimes_B \F).
$$
An isomorphism with cohomology is given by the projection
$$
\mathcal{H}^{(0,\bullet)}_{\adel}(\F) \to H^{(0,\bullet)}_{\adel}(\F), \qquad \alpha \mapsto [\alpha].
$$
Note that, in the untwisted case (which is to say, the case where we do not tensor $\Omega^\bullet$ with an Hermitian module $\F$) the Laplacian operators coincide, and hence by the Hodge identification of harmonic forms and cohomology classes, the holomorphic and anti-holomorphic cohomology groups coincide \cite[Corollary 7.7]{MMF3}.

\subsection{Irreducible CQH-Hermitian Spaces} 

Let $\mathbf{H} = \left(B=A^{\co(H)}, \Omega^{\bullet},\Omega^{(\bullet,\bullet)}, \sigma\right)$ be a  CQH-Hermitian space. Since  $\Omega^{(1,0)}$ and $\Omega^{(0,1)}$ are objects in $\,^A_B\mathrm{mod}_0$, we can ask if they are irreducible as objects in that category. We claim that $\Omega^{(1,0)}$ is irreducible if and only if $\Omega^{(0,1)}$ is irreducible. Indeed, for any proper non-trivial sub-object $N \subset \Omega^{(1,0)}$, it is clear that $N^* := \{ \omega^* \,|\, \omega \in N \}$ is a proper non-trivial sub-object of~$\Omega^{(0,1)}$. Clearly, we can use the same argument in the opposite direction, which proves the claim. This leads to the next definition.

\begin{defn} \label{defn:irrHS}
A CQH-Hermitian space $\mathbf{H} = \left(B=A^{\co(H)}, \Omega^{\bullet},\Omega^{(\bullet,\bullet)}, \sigma\right)$ is said to be {\em irreducible}  if $\Omega^{(1,0)}$, or equivalently $\Omega^{(0,1)}$, is irreducible as an object in $\,^A_B\mathrm{mod}_0$.  
\end{defn}

Irreducible Hermitian structures  generalise our motivating family of examples, the irreducible quantum flag manifolds $\OO_q(G/L_S)$, as presented in \textsection\ref{section:DJ}. Many of the properties of the irreducible quantum flag manifolds extend to this more general setting. 

\begin{lem}\label{lem:thethm}  Let $\mathbf{H} = \left(B = A^{\co(H)}, \Omega^{\bullet},\Omega^{(\bullet,\bullet)},\sigma\right)$ be a factorisable irreducible CQH-Hermitian space, and let $\EE$ be a relative line module over $B$.
\begin{enumerate} 

\item If $\Omega^{(0,1)}$ and $B$ are non-isomorphic as objects in   $^A_B\textrm{mod}_0$, then there exists at most one left $A$-covariant $(0,1)$-connection for $\EE$.

\item If $\Phi(\Omega^{(0,1)})$ is not self-dual as a left $H$-comodule, and $\adel_{\EE}$ is a left $A$-covariant $(0,1)$-connection for $\EE$, then $\adel_{\EE}$ is automatically a holomorphic structure for $\EE$.

\item The space of left $A$-coinvariant $(1,1)$-forms is a one-dimensional space spanned by~$\sigma$, that is, 
$$
\,^{\mathrm{co}(A)}\!\left(\Omega^{(1,1)}\right) = \CC \sigma.
$$

\item For any  Hermitian holomorphic relative line module $(\mathcal{E},h_{\EE},\adel_{\EE})$ over $\mathbf{H}$, there exists a scalar  $\vartheta \in \mathbb{R}$ such that 
\begin{align*}
\nabla^2(e) =\vartheta \mathbf{i } \sigma \otimes e, &&  \text{ \,for all }   e \in \mathcal{E},
\end{align*}
where $\nabla$ is the Chern connection of $(\mathcal{E},h_{\EE},\adel_{\EE})$.

\item  For $\EE, \, \nabla$,  and $\vartheta$ as in \emph{(iv)}, and denoting $2n := \dim(\mathbf{H})$, it holds that
\begin{align*}
[\Lambda_{\EE}, \nabla^2](e) = \Lambda_{\EE} \circ \nabla^2(e) =  \vartheta n e, && \text{for all }   e \in \mathcal{E}.
\end{align*}

\item In the K\"ahler setting, the Akizuki--Nakano identity \eqref{eqn:ANid} simplifies to 
\begin{align*}
    \Delta_{\adel_{\EE}} = \Delta_{\del_{\EE}} + \vartheta n \,\id
\end{align*}
for any relative line module $\EE$.
\end{enumerate}
\end{lem}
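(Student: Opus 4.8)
The statement to prove is item (v) of Lemma~\ref{lem:thethm}: given the Chern connection $\nabla$ of a covariant Hermitian holomorphic line bundle with $\nabla^2(e) = \lambda \mathbf{i} \sigma \otimes e$ for all $e \in \EE$ (as established in (iv)), we want $\Lambda_\EE \circ \nabla^2(e) = \lambda n \mathbf{i} e$. The plan is to reduce this to a computation with the $\mathfrak{sl}_2$-triple $(L, \Lambda, H)$ acting on $\Omega^\bullet$, tensored with the identity on $\EE$.

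First I would unwind the left-hand side using (iv): for $e \in \EE$,
\[
\Lambda_\EE \circ \nabla^2(e) = \Lambda_\EE(\lambda \mathbf{i} \sigma \otimes e) = \lambda \mathbf{i} \, (\Lambda \otimes \id_\EE)(\sigma \otimes e) = \lambda \mathbf{i} \, \Lambda(\sigma) \otimes e,
\]
so the whole claim comes down to the identity $\Lambda(\sigma) = n \cdot 1$ in $\Omega^{(0,0)} = B$. Since $\sigma = L(1)$ (as $L(\omega) = \sigma \wedge \omega$ and $1 \in \Omega^0$), this is $\Lambda L(1) = n$.

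The key step is to evaluate $\Lambda L$ on $\Omega^0$. From the Lefschetz identities \eqref{eqn:LIDS} we have $[L, \Lambda] = H$, hence $\Lambda L = L \Lambda - H$ on all of $\Omega^\bullet$. Applying this to $1 \in \Omega^0$: the counting operator gives $H(1) = (0 - n)\cdot 1 = -n$, and $\Lambda(1) = 0$ since $\Lambda$ lowers degree by $2$ and there are no forms in negative degree (equivalently, $\Lambda = *_\sigma^{-1} \circ L \circ *_\sigma$ sends $\Omega^0$ into $\Omega^{-2} = 0$). Therefore $\Lambda L(1) = L\Lambda(1) - H(1) = 0 - (-n) = n$, which gives $\Lambda(\sigma) = n\cdot 1$ and completes the argument. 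Combining, $\Lambda_\EE \circ \nabla^2(e) = \lambda \mathbf{i} \, n \otimes e = \lambda n \mathbf{i}\, e$ under the identification $B \otimes_B \EE \simeq \EE$.

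I do not anticipate a serious obstacle here: the only subtle point is making sure the $\mathfrak{sl}_2$ relations \eqref{eqn:LIDS} are being used in the correct normalisation (the paper writes $[H,L] = 2H$, which is a typo for $2L$, but the relation $[L,\Lambda] = H$ that we actually need is stated unambiguously), and confirming that $\sigma$ really is $L$ applied to the unit so that the degree bookkeeping in $H$ is valid. Both are immediate from the definitions of $L$, $H$, and $\Lambda$ given in Section~\ref{subsection:HandKS}. One could alternatively phrase this via the Hodge map using $\Lambda = *_\sigma^{-1} L *_\sigma$ together with the explicit formula for $*_\sigma$ on $P^{(0,0)} = \Omega^0$, but the $\mathfrak{sl}_2$ route is shorter and avoids any factorials.
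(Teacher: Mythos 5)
Your argument for part (v) is correct and is essentially the paper's own computation: there, (iv) and (v) are proved together, and the key step is exactly the identity $\Lambda_\EE \circ L_\EE(e) = -H_\EE(e) = n\,e$ on degree-zero elements, obtained from $[L_\EE,\Lambda_\EE]=H_\EE$ together with $\Lambda_\EE(e)=0$ --- the same $\mathfrak{sl}_2$ bookkeeping you phrase as $\Lambda(\sigma)=\Lambda L(1)=-H(1)=n$. You are also right that $[H,L]=2H$ in \eqref{eqn:LIDS} is a typo for $[H,L]=2L$, and only $[L,\Lambda]=H$ is needed. The only caveat is that your proposal treats item (v) alone, taking (iv) as input (which is how (v) is phrased); in the paper the same Lefschetz computation is embedded in the Akizuki--Nakano argument that simultaneously establishes the reality of the scalar $\lambda$ in (iv).
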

\begin{proof}
~~ {} \newline
(i) Assume that there exists a non-zero morphism $f:\EE \to \Omega^{(0,1)} \otimes_B \EE$, or equivalently, denoting $V := \Phi(\EE)$,  a non-zero morphism $\phi:V \to \Phi(\Omega^{(0,1)}) \otimes V$. This would imply the existence of a non-zero morphism 
\begin{align*}
 \mathbb{C} \simeq V \otimes V^* \to \Phi(\Omega^{(0,1)}) \otimes V \otimes V^* \simeq \Phi(\Omega^{(0,1)}),
\end{align*}
where $V^*$ denotes the dual left $H$-comodule of $V$. By irreducibility of $\Phi(\Omega^{(0,1)})$ this is necessarily an isomorphism. Since this contradicts our assumption that 
$
\Omega^{(0,1)} \not \simeq B,
$
we must conclude that there exists no such morphism $f$.

Let us now assume that there exists another  covariant $(0,1)$-connection $\delta_{\EE}:\EE \to \Omega^{(0,1)} \otimes_B \EE$ distinct from $\adel_{\EE}$. Then $\adel_{\EE} - \delta_{\EE}$ is a non-trivial left $B$-module map, and hence a non-trivial morphism in $^A_B\textrm{mod}_0$. Since we have shown that no such morphism exists, we must conclude that no such $\delta_{\EE}$ exists, which is to say $\adel_{\EE}$ is the unique covariant $(0,1)$-connection for $\EE$.   

\vspace{0.25cm}
\hspace{0.5cm}  (ii)
Assuming that  $\Phi(\Omega^{(0,1)})$ is not self-dual implies that there can exist no copy of 
$\mathbb{C}$ in $\Phi(\Omega^{(0,1)}) \otimes \Phi(\Omega^{(0,1)})$. Splitting the multiplication map gives an embedding  of $\Phi(\Omega^{(0,2)})$ into  $\Phi(\Omega^{(0,1)}) \otimes \Phi(\Omega^{(0,1)})$, which means that there is no copy of 
$\mathbb{C}$ in $\Phi(\Omega^{(0,2)})$. 

Let us now assume that $\adel_{\EE}$ is not flat. Since $\adel_{\EE}^2$ is a left $B$-module map from $\EE$  to $\Omega^{(0,2)} \otimes_B \EE$, it is automatically a non-zero morphism in $^A_B\textrm{mod}_0$. This means that we have a non-zero morphism $V \to \Phi(\Omega^{(0,2)}) \otimes V$, and hence a morphism 
$$
\mathbb{C} \simeq V \otimes V^* \to \Phi(\Omega^{(0,2)}) \otimes V \otimes V^* \simeq \Phi(\Omega^{(0,2)}).
$$
Since this contradicts our assumption that $\Phi(\Omega^{(0,1)})$ is not self-dual, we are forced to conclude that $\adel_{\EE}$ is necessarily flat.

\vspace{0.25cm}
\hspace{0.5cm}  (iii)
Since $^H\text{mod}$ is a rigid monoidal category, $V$ is invertible.  
In particular 
$
 V \otimes V^* \simeq \mathbb{C},
$
By assumption $\Omega^{(\bullet, \bullet)}$ is factorisable, hence $\Phi(\Omega^{(1,1)})$ is isomorphic to  $\Phi(\Omega^{(1,0)}) \otimes \Phi(\Omega^{(0,1)})$. Denote the decomposition of $\Phi(\Omega^{(1,1)})$ into irreducible comodules by 
\begin{align*} 
& ~~~\Phi(\Omega^{(1,1)}) \simeq  \Phi(\Omega^{(1,0)}) \otimes \Phi(\Omega^{(0,1)}) \simeq: \bigoplus_{i} K_i. 
\end{align*}
Since $\sigma$ is a left $A$-coinvariant Hermitian form, we have $[\sigma] \in \, ^{\mathrm{co}(A)}(\Phi(\Omega^{(1,1)}))$, implying that one of the summands $K_i$ must be isomorphic to the trivial comodule. Moreover, since both $\Phi(\Omega^{(1,0)})$ and $\Phi(\Omega^{(0,1)})$ are by assumption irreducible, $\Phi(\Omega^{(1,0)})$ and $\Phi(\Omega^{(0,1)})$ are dual and precisely one of the summands will be trivial. For $\unit : \F \mapsto \Psi \circ \Phi(\F)$ the unit of Takeuchi's equivalence, it is easily seen that 
\begin{align*}
\unit\!\left(\!^{\,\co(A)}(\Omega^{(1,1)})\right) =  1 \otimes \left(^{\co(H)}(\Phi(\Omega^{(1,1)}))\right)  \simeq   1 \otimes \mathbb{C},
\end{align*}
giving the claimed equality.

\vspace{0.25cm}
\hspace{0.5cm}  (iv), (v) Consider the space of left $H$-comodule maps $V \to \Phi(\Omega^{(1,1)}) \otimes V$, which is clearly isomorphic to the space of left $H$-comodule maps
$$
\mathbb{C} \simeq V \otimes V^* \to \Phi(\Omega^{(1,1)}) \otimes V \otimes V^* \simeq \Phi(\Omega^{(1,1)}). 
$$
Since ${}^{\mathrm{co}(A) }(\Omega^{(1,1)}) = \CC \sigma$, these spaces must be one-dimensional. Now the curvature operator is a morphism in $^A_B\text{mod}_0$, and so, we can consider its image under Takeuchi's functor $\Phi$. As a morphism from $V$ to $\Phi(\Omega^{(1,1)}) \otimes V$, it must be of the form
\begin{align*}
\Phi(\nabla^2)([e]) = \alpha [\sigma] \otimes [e], & & \textrm{ for some } \alpha \in \mathbb{C}.
\end{align*}

Consider now the commutative diagram:
\begin{align*}
\xymatrix{ 
\Omega^{(1,1)} \otimes_B \EE  & & & &  \ar[llll]_{~\,\unit^{-1} \,~~ } A \square_H  \Phi\!\left(\Omega^{(1,1)} \otimes_B \EE\right) \\       
\EE  \ar[u]^{\nabla^2}   \ar[rrrr]_{\unit ~~~ }  & & & & A \square_H \Phi(\EE) \ar[u]_{\id \otimes \Phi(\nabla^2)}.
}
\end{align*}
For any particular element $e \in \EE$, it holds that 
\begin{align*}
\nabla^2(e) = \unit^{-1} \circ (\id \otimes \Phi(\nabla^2)) \circ \unit(e) = & \, \unit^{-1} \circ (\id \otimes \Phi(\nabla^2))(e_{(-1)} \otimes [e_{(0)}]) \\
= & \, \alpha  \unit^{-1}(e_{(-1)} \otimes [\sigma \otimes e_{(0)}])\\
= & \, \alpha e_{(-2)}S(e_{(-1)})\sigma \otimes e_{(0)}\\
= & \, \alpha \e(e_{(-1)}) \sigma \otimes e_{(0)}\\
= & \, \alpha \sigma \otimes e,
\end{align*}
where we have used the formula for the inverse of Takeuchi's unit, as presented in \eqref{eqn:unitinverse}.

The operators $\Delta_{\del_{\EE}}$ and $\Delta_{\adel_{\EE}}$ are, by construction, self-adjoint operators on $\Omega^{\bullet} \otimes_B \EE$. Thus any eigenvalue of $\Delta_{\del_{\EE}} - \Delta_{\adel_{\EE}}$ must be a real scalar. It now follows from the Akizuki--Nakano identity that
\begin{align*}
(\Delta_{\del_{\EE}} - \Delta_{\adel_{\EE}})(e) =   [\mathbf{i} \nabla^2, \Lambda_{\EE}](e)
                                                                           =   - \mathbf{i} \Lambda_{\EE} \circ \nabla^2(e)
                                                                           =   - \alpha \mathbf{i} \Lambda_{\EE}(\sigma \otimes e)\
                                                                           =   -  \alpha \mathbf{i} \, \Lambda_{\EE} \circ L_{\EE}(e).
\end{align*}

Recalling now the twisted Lefschetz identities, and denoting $2n := \dim(\mathbf{H})$, the above expression can be reduced to  
\begin{align*}
-  \alpha \mathbf{i}  \Lambda_{\EE} \circ L_{\EE}(e)
                                                                           =  ~ \alpha \mathbf{i} \, [L_{\EE}, \Lambda_{\EE}](e)
                                                                           =  ~ \alpha \mathbf{i} H_{\EE}(e)
                                                                           =  -  \mathbf{i} n \alpha  e.
\end{align*}
Thus $\alpha \mathbf{i} \in \mathbb{R}$ and setting $\vartheta := -\alpha \mathbf{i}$ gives the equation in \textrm{(iv)} as claimed, establishing (v) in the process.

\vspace{0.25cm}
\hspace{0.5cm}  (vi) The simplified form of the Akizuki--Nakano identity follows directly from (v).
\end{proof}

For the special case of a CQH-K\"ahler space, Lemma~\ref{lem:thethm} (iv) implies the following result. 

\begin{thm} \label{thm:threeway}
For any Hermitian holomorphic relative line module $\EE$ over a factorisable irreducible  CQH-K\"ahler space, precisely one of the following three possibilities holds:
\begin{enumerate}
\item $\mathcal{E} > 0$,
\item $\mathcal{E}$  \emph{ is flat}, 
\item $\mathcal{E}<0$.
\end{enumerate}
\end{thm}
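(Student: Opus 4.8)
The plan is to derive Theorem~\ref{thm:threeway} directly from Lemma~\ref{lem:thethm}, parts (iv) and (v), by exploiting the fact that in the K\"ahler case the Hermitian form $\sigma$ can be taken to be the closed K\"ahler form $\kappa$. Let $\EE$ be a covariant line bundle over the irreducible CQH-K\"ahler space $\mathbf{K} = (B, \Omega^\bullet, \Omega^{(\bullet,\bullet)}, \kappa)$. First I would equip $\EE$ with a covariant Hermitian metric $h_\EE$; since $\Phi(\EE)$ is a one-dimensional, hence simple, left $H$-comodule, such a metric exists and is unique up to a positive scalar (as noted in \textsection\ref{subsect:covHERMstructure}). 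Next I would produce a covariant $(0,1)$-connection $\adel_\EE$ on $\EE$: one can always take the canonical connection coming from Takeuchi's equivalence, and it is automatically covariant. By Lemma~\ref{lem:thethm}(ii), provided $\Phi(\Omega^{(0,1)})$ is not self-dual, $\adel_\EE$ is automatically flat, giving $\EE$ the structure of a covariant Hermitian holomorphic line bundle; by Lemma~\ref{lem:thethm}(i) this holomorphic structure is then the unique covariant one, so positivity/negativity/flatness is an intrinsic property of $\EE$ and not of auxiliary choices.

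With the Chern connection $\nabla$ of $(\EE, h_\EE, \adel_\EE)$ in hand, Lemma~\ref{lem:thethm}(iv) supplies a \emph{unique} real scalar $\lambda$ with $\nabla^2(e) = \lambda \mathbf{i}\, \kappa \otimes e$ for all $e \in \EE$. The three cases of the theorem then correspond exactly to the trichotomy $\lambda < 0$, $\lambda = 0$, $\lambda > 0$: comparing with Definition~\ref{defn:positiveNegativeVB}, if $\lambda < 0$ then setting $\theta := -\lambda \in \mathbb{R}_{>0}$ gives $\nabla^2(e) = -\theta\mathbf{i}\,\kappa \otimes e$, i.e. $\EE > 0$; if $\lambda > 0$ then $\theta := \lambda$ gives $\EE < 0$; and if $\lambda = 0$ then $\nabla^2 = 0$, i.e. $\EE$ is flat. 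Since $\lambda$ is uniquely determined, exactly one of these holds, which is precisely the claimed statement. I would also remark that flatness here is independent of the metric: by Lemma~\ref{lem:thethm}(i) the $(0,1)$-connection is unique, and replacing $h_\EE$ by a positive rescaling does not change $\nabla^2$ on a line bundle.

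The only genuine gap is the pair of hypotheses needed to invoke Lemma~\ref{lem:thethm}: that $\Omega^{(0,1)} \not\simeq B$ and that $\Phi(\Omega^{(0,1)})$ is not self-dual. In the stated generality of an arbitrary irreducible CQH-K\"ahler space these are not automatic, so I expect this to be the main point requiring care: either the theorem should be read as carrying these mild nondegeneracy assumptions implicitly (they hold for all the motivating examples, the irreducible quantum flag manifolds, as will be verified in \textsection\ref{section:DJ}), or one argues that for a genuine K\"ahler structure of total degree $2n \geq 2$ the irreducibility of $\Omega^{(1,0)}$ together with factorisability already forces $\Omega^{(0,1)} \not\simeq B$ on dimension grounds, and self-duality of $\Phi(\Omega^{(0,1)})$ is likewise excluded because a self-dual $\Phi(\Omega^{(0,1)})$ would, via the factorisable multiplication map, place a copy of the trivial comodule inside $\Phi(\Omega^{(0,2)})$, which is incompatible with the structure of a nondegenerate K\"ahler form. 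I would state whichever of these is cleanest as a short preliminary observation and then the theorem follows in a couple of lines from Lemma~\ref{lem:thethm}.

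Finally, for completeness I would note that the self-adjointness of $\Delta_{\del_\EE}$ and $\Delta_{\adel_\EE}$ — which is what forced $\lambda \in \mathbb{R}$ in Lemma~\ref{lem:thethm}(v) — is exactly the reason the Chern curvature is constrained to be a real multiple of $\kappa$, and hence the reason the trichotomy is clean; no further curvature computation is needed, which is the whole point of restricting to the irreducible case.
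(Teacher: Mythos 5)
Your core argument is exactly the paper's proof: Theorem~\ref{thm:threeway} is deduced immediately from Lemma~\ref{lem:thethm}\,(iv)--(v), the trichotomy being the sign of the unique real scalar $\lambda$ in $\nabla^2(e) = \lambda \mathbf{i}\,\kappa \otimes e$, with no further curvature computation. The paper does not attempt to manufacture the covariant Hermitian holomorphic structure inside this theorem (for the irreducible quantum flag manifolds it is supplied later by Theorem~\ref{thm:holomorLINES}), so your first and third paragraphs --- in particular the unsubstantiated ``canonical connection from Takeuchi's equivalence'' and the dimension-grounds/self-duality speculation --- are not needed, though your observation that the hypotheses of Lemma~\ref{lem:thethm}\,(i)--(ii) are not automatic in full generality is fair.
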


The general cohomological consequences of positivity presented in the Kodaira vanishing theorem now allow us to produce sufficient cohomological conditions for positivity, flatness, or negativity, of a line module over an irreducible CQH-K\"ahler space.

\begin{cor} \label{cor:cohomology.pos.neg} Let $\EE$  be an Hermitian holomorphic relative line module over an irreducible CQH-K\"ahler space $\mathbf{K}$. 
\begin{enumerate}
\item If $H^0_{\del}(\EE)$ and $H^0_{\adel}(\EE)$ are both non-zero, then $\EE$ is flat, and $H^0_{\del}(\EE) = H^0_{\adel}(\EE)$.
\item  If $H^0_{\adel}(\EE) \neq 0$ and $H^0_{\adel}(\EE) \neq H^0_{\del}(\EE)$, then $\EE$ is positive and $H^0_{\del}(\EE) =0$.
\item  If $H^0_{\del}(\EE) \neq 0$ and $H^0_{\adel}(\EE) \neq H^0_{\del}(\EE)$, then $\EE$ is negative and $H^0_{\adel}(\EE) =0$.
\end{enumerate}
\end{cor}
\begin{proof}
\hspace{0.5cm}  (i) By part (vi) of Lemma \ref{lem:thethm}, for any eigenvector $e$ of $\Delta_{\del_{\EE}}$, with eigenvalue $\lambda$, 
$$
\Delta_{\adel_{\EE}}(e) = \Delta_{\del_{\EE}}(e) + \vartheta n e = (\lambda + \vartheta n) e.
$$
Since $\Delta_{\del_{\EE}}$ is a positive operator $\lambda \in \mathbb{R}_{\geq 0}$. Thus, if $\EE < 0$, which is to say, if $\vartheta > 0$,  we would have that $\lambda + \vartheta n > 0$, implying that $\Delta_{\adel_{\EE}}(e) \neq 0$. In particular, we see that if $\mathcal{E} < 0$, then by the isomorphism of harmonic elements and cohomology classes $H^0_{\adel}(\mathcal{E}) = 0$. An analogous argument implies that if $\mathcal{E} > 0$, then $H^0_{\del}(\mathcal{E}) = 0$.

\vspace{0.25cm}

\hspace{0.5cm}  From these considerations we see that if $H^0_{\del}(\EE)$ and $H^0_{\adel}(\EE)$ are both non-zero, then $\EE$ can be neither positive nor negative. It now follows from Theorem~\ref{thm:threeway} that $\EE$ is flat. Finally, we see that if $\mathcal{E}$ is flat, then since the Laplacians $\Delta_{\adel_{\EE}}$ and  $\Delta_{\del_{\EE}}$ coincide, the associated cohomology groups must also coincide.

\vspace{0.25cm}

\hspace{0.5cm}  (ii) From the argument of (i) we see that if $H^0_{\adel}(\EE) \neq 0$, then $\EE$ cannot be negative. Moreover, since the holomorphic and anti-holomorphic cohomology groups are not equal, $\EE$ cannot be flat. Theorem~\ref{thm:threeway} now implies that $\EE$ is positive. Finally, it follows from (i) that if  $H^0_{\del}(\EE)$ were non-zero, then $\EE$ would be flat. Thus to avoid contradiction we must assume that $H^0_{\del}(\EE) = 0$.

\vspace{0.25cm}

\hspace{0.5cm}  (iii) The argument for this case is exactly analogous to the argument of (ii).
\end{proof}

\begin{eg} 
We now consider an interesting family of examples, namely  irreducible CQH-K\"ahler spaces~$\mathbf{K} = (B = A^{\co(H)},
\Omega^{\bullet},\Omega^{(\bullet,\bullet)},\kappa)$  of dimension $4$. In particular, we bear in mind the quantum projective plane $\mathcal{O}_q(\mathbb{CP}^2)$, a member of the general family of examples discussed in \textsection\ref{section:DJ}. Note that in dimension $4$ the Hodge map satisfies~$\ast_\kappa^2 = \id$ on $2$-forms. Hence~$\ast_\kappa$
has eigenvalues~$1$ and~$-1$. It follows from the definition 
of~$\ast_\kappa$ that~$\ast_\kappa(\kappa) = \kappa$. Following the classical definition we define the \emph{first Chern class}~$c_1(\EE)$, of a Hermitian holomorphic relative line module~$\EE$, to be $c_1(\EE) := \trace \nabla^2$, where $\trace$ is the trace of $\nabla^2$ easily defined using Takeuchi's equivalence.  By
Lemma~\ref{lem:thethm} the  first Chern class $c_1(\EE)$ is proportional to~$\kappa$, and so,  it satisfies 
\[
  \ast_{\kappa}(c_1(\EE)) = c_1(\EE).
\]
In the classical setting  such connections are called \emph{self-dual connections}. They are of interest because they
satisfy the Yang--Mills equations~\cite{AHDM}. For a more detailed discussion for the special case of
$\OO_q(\mathbb{CP}^2)$, see the pair of papers~\cite{DAndreaLandiMonopoles,Antiselfdual}. 
\end{eg}

\begin{remark}
Any positive, negative, or flat  relative Hopf module $(\F,h,\adel_{\F})$ over an irreducible CQH-K\"ahler space~$\mathbf{K} = (B = A^{\co(H)},\Omega^{\bullet},\Omega^{(\bullet,\bullet)},\kappa)$ satisfies
\begin{align} \label{eqn:HEinstein}
 \Lambda_{\F} \circ \nabla^2 =  \vartheta \mathbf{i} \, \mathrm{id}_{\F}, & & \text{ for some } \vartheta  \in \mathbb{R}.
\end{align}
This is established by a verbatim extension of the arguments of Lemma \ref{lem:thethm} to the relative Hopf module setting.  What is interesting about \eqref{eqn:HEinstein} is that it is satisfied by a far larger class of Hermitian holomorphic modules than those which are  positive, negative, or flat. Classically, this motivates the definition of an Hermite--Einstein module \cite[\textsection4.B]{HUY}. We observe that this definition carries over directly to the noncommutative setting: For a CQH-K\"ahler space $\mathbf{K}$, we say that an Hermitian holomorphic module $(\mathcal{F},h,\adel_{\F})$ over $\mathbf{K}$ is {\em Hermite--Einstein} if 
\begin{align*}
\Lambda_{\F} \circ \nabla^2 = \gamma \mathbf{i} \, \mathrm{id}_{\F}, & & \text{ for some } \gamma  \in \mathbb{R}.
\end{align*}

Hermite--Einstein vector  bundles are objects of central importance in classical complex geometry. They are intimately related to the theory of Yang--Mills connections \cite{HEYMs}. Moreover, the Donaldson--Uhlenbeck--Yau theorem relates the existence of an Hermite--Einstein metric to semi-stability of the vector bundle, see \cite{KobyHit}. The investigation of how such results extend to the  noncommutative setting presents itself as a very interesting direction for future research.
\end{remark}


\section{The Heckenberger--Kolb Calculi for the Irreducible Quantum Flag Manifolds} \label{section:DJ}

In this section we consider our motivating family of examples: the irreducible quantum flag manifolds endowed with their Heckenberger--Kolb calculi. We assume that the reader has some familiarity with the representation
theory of Lie algebras. For standard references see \cite{Humph, JantzenRepBook, VinbergOnishchik}.

\subsection{Drinfeld--Jimbo Quantum Groups}

In this subsection we recall the necessary definitions from the theory of Drinfeld--Jimbo quantum groups. We
refer the reader to~\cite{KSLeabh} for further details, as well to the seminal papers~\cite{DrinfeldICM, Jimbo1986}.
Let $\mathfrak{g}$ be a finite-dimensional complex semi\-simple Lie algebra of rank $r$. We fix a Cartan subalgebra $\mathfrak{h}$ with corresponding root system $\Delta \sseq \mathfrak{h}^*$, where $\mathfrak{h}^*$ denotes the linear dual of $\mathfrak{h}$.  
With respect to a choice of simple roots $\Pi = \{\alpha_1, \dots, \alpha_r\}$, denote by $(\cdot,\cdot)$ the symmetric bilinear form induced on $\mathfrak{h}^*$ by the  Killing form of $\mathfrak{g}$, normalised so that any shortest simple root $\alpha_i$ satisfies $(\alpha_i,\alpha_i) = 2$. Let $\{\varpi_1, \dots, \varpi_r\}$ denote the corresponding set of fundamental weights of~$\mathfrak{g}$. The {\em coroot} $\alpha_i^{\vee}$ of a simple root $\alpha_i$ is defined by
$
\alpha_i^{\vee} :=  2\alpha_i/(\alpha_i,\alpha_i).
$
The Cartan matrix $A = (a_{ij})_{ij}$ of $\mathfrak{g}$ is the $(r \times r)$-matrix defined by
$
a_{ij} := \big(\alpha_i^{\vee},\alpha_j\big).
$

Let  $q \in \bR$ such that  $q \notin \{ -1,0,1\}$, and denote $q_i := q^{(\alpha_i, \alpha_i)/2}$. The \emph{quantised enveloping algebra}  $U_q(\mathfrak{g})$ is the  noncommutative associative  algebra  generated by the elements   $E_i, F_i, K_i$, and  $K^{-1}_i$, for $ i=1, \ldots, r$,  subject to the relations 
\begin{align*}
 K_iE_j =  q_i^{a_{ij}} E_j K_i, \quad  K_iF_j= q_i^{-a_{ij}} F_j K_i, \quad  K_i K_j = K_j K_i, \quad K_iK_i^{-1} = K_i^{-1}K_i = 1,\\
  E_iF_j - F_jE_i  = \delta_{ij}\frac{K_i - K\inv_{i}}{q_i-q_i^{-1}}, ~~~~~~~~~~~~~~~~~~~~~~~~~~~~~~~~~~~~~~~~~
\end{align*}
along with the \emph{quantum Serre relations}  which we omit.
A Hopf algebra structure is defined  on $U_q(\mathfrak{g})$ by
\begin{align*}
\DEL(K_i) &= K_i \oby K_i,\quad  \DEL(E_i) = E_i \oby K_i + 1 \oby E_i, \quad \DEL(F_i) = F_i \oby 1 + K_i\inv  \oby F_i,\\
& \qquad S(E_i) =  - E_iK_i\inv,    \quad S(F_i) =  -K_iF_i, \quad  S(K_i) = K_i\inv,  \\
&\qquad \qquad \qquad \e(E_i) = \e(F_i) = 0, ~~ \e(K_i) = 1.     
\end{align*}  
A Hopf $*$-algebra structure, called the \emph{compact real form} of $U_q(\mathfrak{g})$, is defined by
\begin{align*}
K^*_i : = K_i, & & E^*_i := K_i F_i, & &  F^*_i  :=  E_i K_i \inv. 
\end{align*} 

Let $\mathcal{P}$ be the weight lattice of~$\fg$, and $\mathcal{P}^+$ its set of dominant integral weights.  For each $\mu\in\mathcal{P}^+$ there exists an irreducible finite-dimensional $U_q(\mathfrak{g})$-module  $V_\mu$, uniquely defined by the existence of a vector $v_{\hw}\in V_\mu$, which we call a {\em highest weight vector},  satisfying
\[
  E_i \triangleright v_{\hw}=0,\qquad K_i \triangleright v_{\hw} = q^{(\alpha_i,\mu)} v_{\hw}, \qquad
  \text{for all $i=1,\ldots,r$.}
\]
Moreover, $v_{\hw}$ is unique up to scalar multiple. We call any finite direct sum of such \mbox{$U_q(\mathfrak{g})$-representations} a {\em type-$1$ representation}. A vector $v\in V_\mu$ is called a \emph{weight vector} of weight~$\mathrm{wt}(v) \in \mathcal{P}$ if
\begin{align}\label{eq:Kweight}
K_i \triangleright v = q^{(\alpha_i, \mathrm{wt}(v))} v, & & \textrm{ for all } i=1,\ldots,r.
\end{align}
Finally, we note that since $U_q(\mathfrak{g})$ has an invertible antipode, we have an equivalence between $_{U_q(\mathfrak{g})}\textrm{Mod}$, the category of  left $U_q(\mathfrak{g})$-modules, and $\textrm{Mod}_{U_q(\mathfrak{g})}$, the category of right $U_q(\mathfrak{g})$-modules, as induced by the antipode.


\subsection{Quantum Coordinate Algebras and Quantum Flag Manifolds} 
In this subsection we recall some necessary material about quantised coordinate algebras, for further details see \cite[\textsection6 and \textsection7]{KSLeabh}
and~\cite{FRT}.
Let $V$ be a finite-dimensional left $U_q(\mathfrak{g})$-module, $v \in V$, and $f \in V^*$, the $\mathbb{C}$-linear dual of $V$, endowed with its  right \mbox{$U_q(\mathfrak{g})$-module} structure. An important point to note is that, with respect to the equivalence of left and right \mbox{$U_q(\mathfrak{g})$-modules} discussed above, the left module corresponding to $V^*_{\mu}$ is isomorphic to $V_{-w_0(\mu)}$, where $w_0$ denotes the longest element in the Weyl group of $\mathfrak{g}$.

Consider the function  $c^{\textrm{\tiny $V$}}_{f,v}:U_q(\mathfrak{g}) \to \bC$ defined by $c^{\textrm{\tiny $V$}}_{f,v}(X) := f\big(X \triangleright v\big)$. The {\em coalgebra of matrix coefficients} of $V$ is the subspace
\begin{align*}
C(V) := \text{span}_{\mathbb{C}}\!\left\{ c^{\textrm{\tiny $V$}}_{f,v} \,| \, v \in V, \, f \in V^*\right\} \sseq U_q(\mathfrak{g})^*.
\end{align*}
A $U_q(\fg)$-bimodule structure on~$C(V)$ is given by
\begin{equation} \label{eq:Zact}
  (Y\triangleright c^{\textrm{\tiny $V$}}_{f,v} \triangleleft Z)(X) := f\left((ZXY)\triangleright v\right)
  = c^{\textrm{\tiny $V$}}_{f\triangleleft Z, Y \triangleright v}  (X).
\end{equation}
Let $U_q(\mathfrak{g})^\circ$ denote the Hopf dual of $U_q(\mathfrak{g})$. By construction $C(V) \sseq U_q(\mathfrak{g})^\circ$, and moreover that a Hopf subalgebra of $U_q(\mathfrak{g})^\circ$ is given by 
\begin{equation}\label{eq:PeterWeyl}
\OO_q(G) := \bigoplus_{\mu \in \mathcal{P}^+} C(V_{\mu}).
\end{equation} 
We call $\OO_q(G)$, endowed with the dual $*$-structure, the {\em quantum coordinate algebra of~$G$}, where~$G$ is the compact, simply-connected, simple Lie group  having~$\mathfrak{g}$ as its complexified Lie algebra.

\subsection{Quantum Flag Manifolds} \label{subsection:QFM}

For $\{\alpha_i\}_{i\in S}$ a subset of simple roots,  consider the Hopf $*$-subalgebra
\begin{align*}
U_q(\mathfrak{l}_S) := \big< K_i, E_j, F_j \,|\, i = 1, \ldots, r; j \in S \big>,
\end{align*} 
which $q$-deforms the classical reductive Lie algebra $\mathfrak{l}_S$. We have obvious analogues of weight vectors, highest weight vectors, type-$1$ representations. The category of finite-dimensional type-$1$ modules is semisimple, and each simple object admits a highest weight vector unique to scalar multiple. Moreover, the weight of the highest weight vector determines the module up to isomorphism, giving a bijective correspondence between irreducible type-$1$ modules and the set of weights
\begin{align*}
\mathcal{P}^+ \cup \mathcal{P}_{S^c}, & & \textrm{ where } \mathcal{P}_{S^c} = \mathrm{span}_{\mathbb{Z}}\{\varpi_x \,|\, x \in \Pi\backslash S\}.
\end{align*}
Just as for $\OO_q(G)$, we can construct the type-$1$ dual of $U_q(\mathfrak{l}_S)$ using matrix coefficients and we denote this Hopf algebra by $\OO_q(L_S)$.

Restriction of domains gives us a surjective Hopf $*$-algebra map 
\begin{align*}
\pi_S :\OO_q(G) \to \OO_q(L_S),
\end{align*}
dual to the inclusion of $U_q(\mathfrak{l}_S)$ of $U_q(\mathfrak{g})$ (see also  Appendix \ref{app:Levi}). The {\em quantum flag manifold associated} to $S$ is the CQGA-homogeneous space 
\begin{align*}
\OO_q\big(G/L_S\big) := \OO_q \big(G\big)^{\text{co}\left(\OO_q(L_S)\right)}
\end{align*} 
associated to $\pi_S$. Takeuchi's equivalence now implies that every simple relative Hopf module over $\OO_q(G/L_S)$ is of the form
\begin{align*}
    \F_{\mu} := \OO_q(G) \square_{\OO_q(L_S)} W_{\mu}, & & \textrm{ for } \mu \in \mathcal{P}^+ \cup \mathcal{P}_{S^c},
\end{align*}
where by abuse of notation $W_{\mu}$ denotes the $\OO_q(L_S)$-comodule corresponding to $\mu$.

\subsection{Irreducible Quantum Flag Manifolds}

Let $S = \{\alpha_1, \dots, \alpha_r \} \setminus \{\alpha_x\}$ where $\alpha_x$ has coefficient $1$ in the expansion of the
highest root of $\mathfrak{g}$. Then we say that the associated quantum flag manifold is  {\em irreducible}. In
the classical limit of $q=1$, these homogeneous spaces reduce to the family of compact Hermitian symmetric
spaces, as classified, for example, in~\cite{BastonEastwood}. Presented in Table~\ref{table:CQFMs} of Appendix~\ref{app:B}  is a useful diagrammatic presentation of the set of simple roots defining the
irreducible quantum flag manifolds, where the node corresponding to $\alpha_x$ is the coloured one.   

Let us now choose for once and for all, for each irreducible $U_q(\mathfrak{g})$-module $V_{\mu}$, a weight basis $\{v_i\}_{i=1}^{N}$, with corresponding dual basis $\{f_i\}_{i=1}^{N}$,  where $N := \dim(V_{\mu})$. As shown in \cite[Proposition 3.2]{HK}, for the irreducible case, a set of generators for $\OO_q(G/L_S)$ is given by 
\begin{align*}
z_{ij} := c^{V_{\mu_x}}_{f_i,v_N}c^{V_{-w_0(\mu_x})}_{v_j,f_N}  & & \text{ for } i,j = 1, \dots, N = \dim(V_{\mu_x}),
\end{align*}
where $v_N$ is the highest weight basis element of $V_{\mu_x}$, and $f_N$ is the lowest weight basis element of $V_{-w_0( \mu_x)}$.

In the classical case $\fl_S$ admits a direct sum decomposition $\fl_S = \fl_S^s \oplus \fc$ into the semisimple
part~$\fl_S^{\,\mathrm{s}}$ and the centre~$\fc$, given a decomposition 
$$
U(\mathfrak{l}_S) \simeq U(\mathfrak{l}^{\mathrm{\,s}}_S) \otimes U(\mathfrak{c}),
$$
where  $U(\mathfrak{c})$ is a commutative and cocommutative Hopf algebra generated by the distinguished sum of Chevalley generators $H_i \in \mathfrak{h}$,
\begin{align*}
H_{\varpi_x} := (A^{-1})_{x1}H_1 + \cdots + (A^{-1})_{xr} H_r,
\end{align*}
with  $A^{-1}$ is the inverse of the Cartan matrix of $\mathfrak{g}$. Now for $U_q(\mathfrak{g})$ the element
\begin{align*}
  Z  := K_{\det(A) \varpi_x} = K_1^{a_1} \cdots  K_r^{a_r}, & & \textrm { where } \det(A)\varpi_x =: a_1\alpha_1 + \ldots + a_r \alpha_r
\end{align*}
belongs to the centre of $U_q(\fl_S)$. Note that the scaling $\det(A)$ is chosen to ensure that $\det(A) \varpi_x$ is an element of the root lattice of $\mathfrak{g}$ (see also Remark \ref{rem:Kvarpi} below). The elements $Z$ and $Z^{-1}$ generate a commutative and cocommutative Hopf subalgebra, and by Schur's lemma $Z$ acts as a scalar multiple of the identity on any irreducible $U_q(\mathfrak{l}^{\,\mathrm{s}}_S)$-module. It is instructive to note that any $U_q(\mathfrak{l}_S)$-module is completely determined by its $U_q(\mathfrak{l}^{\,\mathrm{s}}_S)$-module structure and the action of the central element $Z$.

\begin{eg} 
For the case of the quantum projective plane $\OO_q(\mathbb{CP}^2)$, taking $S = \{\alpha_2\}$, which is to say crossing the first node of the Dynkin diagram, the central element is 
$$
Z = K_1^2K_2^1.
$$
Alternatively, crossing the last node, the central element is given by 
$$
Z = K_1^1K_2^2.
$$
For the quantum Lagrangian Grassmannian $\OO_q(\mathbf{L}_3) = \OO_q(\mathrm{Sp}_6/L_S)$, we have
$$
Z = K_1^1K_2^2 K_3^3.
$$
We note that in all three cases, the algebra generated by $U_q(\mathfrak{l}^{\,\,\mathrm{s}}_S)$ and the central element $Z$ is a proper subalgebra of $U_q(\mathfrak{l}_S)$.
\end{eg}

\begin{remark} \label{rem:Kvarpi}
One can make sense of the symbol $K_{\varpi_x}$ as an element of $U_q(\mathfrak{g},\mathcal{P})$, the extension of $U_q(\mathfrak{g})$ which includes generators of the form $K_{\lambda}$, for all $\lambda \in \mathcal{P}$. This allows one to avoid multiplication by the determinant of the Cartan matrix in the definition of the central element $Z$.  See for example \cite[\textsection 3]{SISSACPn}, which deals with the special case of quantum projective space, and \cite[\textsection 2D]{MTSUK} which deals with the general case.
\end{remark}

\subsection{Relative Line Modules over the Irreducible Quantum Flag Manifolds} \label{subsection:LBs}

In this subsection, we recall the necessary facts about the relative line modules over the irreducible quantum flag manifolds $\OO_q(G/L_S)$. We first observe that the one-dimensional $U_q(\mathfrak{l}_S)$-modules correspond to the weights in $\mathcal{P}_{S^c}$, which in turn implies that the one-dimensional $\OO_q(L_S)$-comodules also correspond to the weights in $\mathcal{P}_{S^c}$. Thus by Takeuchi's equivalence the relative line modules are indexed by the weights $\mathcal{P}_{S^c}$. For the special case of the irreducible quantum flag manifolds
$$
\mathcal{P}_{S^c} = \mathbb{Z}\varpi_x.
$$
In this case we denote by $\EE_{l}$ the relative line module corresponding to the weight $l\varpi_x$.

We make two important observations about relative line modules over the irreducible quantum flag manifolds, considered as submodules of $\OO_q(G)$: Firstly, we note that for all $l \in \mathbb{Z}$, we have $(\EE_l)^\ast \simeq \EE_{-l}$. Secondly, we note that for each $l \in \mathbb{Z}$, 
\begin{align*}
h:\EE_l \times \EE_l \to \OO_q(G/L_S), & & (e_1,e_2) \mapsto e_1e_2^*,
\end{align*}
gives $\EE_l$ the structure of an Hermitian relative line module. Since $\EE_l$ is a simple object in $^{~~~~\OO_q(G)}_{\OO_q(G/L_S)}\textrm{mod}_0$,  we see that $h$ is the unique such structure up to positive scalar multiple.

\subsection{The Heckenberger--Kolb Calculi} \label{subsect:hk}

The irreducible quantum flag manifolds are distinguished by the existence of an essentially unique $q$-deformation of their classical de Rham complex. The existence of such a canonical deformation is one of the most important results in the study of the noncommutative geometry of quantum groups, serving as a solid base from which to investigate more general classes of quantum spaces.  The following theorem is a direct consequence of results established in  \cite{HK}, \cite{HKdR}, and \cite{MarcoConj}.

\begin{thm}\label{thm:HKClass}
Over any irreducible quantum flag manifold $\OO_q(G/L_S)$, there exists a unique finite-dimensional left $\OO_q(G)$-covariant differential $*$-calculus
$$
\Omega^{\bullet}_q(G/L_S) \in \, ^{~~~\OO_q(G)}_{\OO_q(G/L_S)}\mathrm{mod}_0,
$$
of classical dimension, that is to say,
  \begin{align*}
    \dim \Phi\!\left(\Omega^{k}_q(G/L_S)\right) = \binom{2M}{k}, & & \text{ for all \,} k = 0, \dots, 2 M,
  \end{align*}
  where $M$ is the complex dimension of the corresponding classical manifold, as presented in
  Table~\ref{table:CQFMsEk} of Appendix~\ref{app:B}.  
\end{thm}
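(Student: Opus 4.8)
The plan is to establish Theorem~\ref{thm:HKClass} by drawing together the three sources cited immediately before the statement, since each handles a distinct piece: existence and uniqueness of the first-order calculus, its prolongation to a full differential $*$-calculus of classical dimension, and the verification that dimensions are exactly the classical binomial coefficients in all degrees. First I would recall from Heckenberger--Kolb \cite{HK} the classification of irreducible (first-order) left $\OO_q(G)$-covariant differential calculi over $\OO_q(G/L_S)$: for each irreducible quantum flag manifold there are exactly two such finite-dimensional FODC, a holomorphic one $\Omega^{(1,0)}$ and an anti-holomorphic one $\Omega^{(0,1)}$, each of dimension $M$ as an object in the category of relative Hopf modules (equivalently, via Takeuchi's equivalence in Appendix~\ref{app:A}, as $\Phi$-images), and their direct sum $\Omega^1 = \Omega^{(1,0)} \oplus \Omega^{(0,1)}$ is the unique FODC of dimension $2M$ that is moreover a $*$-calculus. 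This pins down $\Omega^1_q(G/L_S)$ uniquely and already supplies the candidate complex structure.

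Next I would invoke the results of \cite{HKdR} on the maximal prolongation: one shows that the maximal prolongation $\Omega^\bullet_q(G/L_S)$ of this FODC has classical dimension, i.e.\ $\dim\Phi(\Omega^k) = \binom{2M}{k}$ for all $k$, and in particular total degree $2M$ with $\Omega^{2M}$ one-dimensional (the top form). The $*$-structure on $\Omega^1$ extends uniquely to the prolongation because the prolongation is functorial, and covariance is automatic since the relations defining the prolongation are generated by covariant data. For uniqueness of the \emph{higher} calculus, I would argue as in \cite{MarcoConj}: any finite-dimensional left-covariant differential $*$-calculus extending the (unique) $\Omega^1$ is a quotient of the maximal prolongation; the classical-dimension hypothesis forces this quotient to be an isomorphism in each degree, since the maximal prolongation already realises the minimal possible dimension compatible with being generated in degree zero and of the correct total degree — any proper quotient would drop some $\dim\Phi(\Omega^k)$ below $\binom{2M}{k}$, while the complex structure and Poincar\'e duality (the Lefschetz/Hodge structure recalled in \textsection\ref{subsection:HandKS}) prevent this. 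Combining existence from \cite{HK,HKdR} with this uniqueness argument yields the theorem.

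The main obstacle is the uniqueness-in-higher-degrees step: the classification in \cite{HK} is only for first-order calculi, and it is genuinely nontrivial that the higher-order part is rigid. The key input is that the maximal prolongation of the Heckenberger--Kolb FODC is \emph{itself} of classical dimension — this is the hard content established in \cite{HKdR} (for the $A_n$ series) and completed in general in \cite{MarcoConj}. Given that, the argument I would run is essentially a dimension count: if $\Omega^\bullet$ is any finite-dimensional covariant $*$-calculus with $\Omega^1 \cong \Omega^1_q(G/L_S)$, then $\Omega^\bullet$ is a quotient dg-algebra of the maximal prolongation $\Omega^\bullet_{\max}$; since both are generated in degree zero by the same $\Omega^1$, the quotient map $\Omega^k_{\max} \to \Omega^k$ is surjective for every $k$; imposing $\dim\Phi(\Omega^k) = \binom{2M}{k} = \dim\Phi(\Omega^k_{\max})$ and finiteness of these dimensions forces each such surjection to be an isomorphism. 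Hence $\Omega^\bullet \cong \Omega^\bullet_{\max}$, and uniqueness follows. I would be careful to note that applying $\Phi$ (exact, by Takeuchi) is what lets me phrase everything as finite-dimensional vector space dimension counts, so the surjectivity-plus-equal-dimension argument is legitimate.

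Finally, I would record the complex-structure refinement for use in the sequel: the decomposition $\Omega^1 = \Omega^{(1,0)} \oplus \Omega^{(0,1)}$ from the Heckenberger--Kolb classification induces, on the (unique) prolongation, a covariant factorisable complex structure $\Omega^{(\bullet,\bullet)}_q(G/L_S)$ in the sense of Definition~\ref{defn:factorisable} — factorisability being one of the properties verified in \cite{HKdR,MarcoConj} — so that the hypotheses needed for \textsection\ref{section:IrreducibleCQHHermitianSpaces} are met. This last observation is not strictly part of the statement of Theorem~\ref{thm:HKClass}, but it is the natural place to extract it, and it requires no further work beyond citing the factorisability results already in the literature.
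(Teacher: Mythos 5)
Your proposal is correct and follows essentially the same route as the paper, which offers no independent proof but presents Theorem~\ref{thm:HKClass} as a direct consequence of \cite{HK} (uniqueness of the first-order calculus), \cite{HKdR} and \cite{MarcoConj} (classical dimensions of the maximal prolongation), with the details of exactly the dimension-count-via-Takeuchi argument you describe spelled out in \cite[\textsection10]{DOSII}. The only quibble is attributional rather than logical: the classical-dimension computation for the maximal prolongation is carried out in \cite{HKdR} for all irreducible quantum flag manifolds, not only the $A$-series, while \cite{MarcoConj} contributes the K\"ahler-form statements rather than completing the dimension count, but this does not affect the validity of your argument.
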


The calculus $\Omega^{\bullet}_q(G/L_S)$, which we call the \emph{Heckenberger--Kolb calculus} of $\OO_q(G/L_S)$,  has many remarkable properties. We begin with the existence of a unique covariant complex structure, following from the results of  \cite{HK}, \cite{HKdR}, and \cite{MarcoConj}.

\begin{prop} \label{prop:complexstructure}
Let $\OO_q(G/L_S)$ be an irreducible quantum flag manifold, and $\Omega^{\bullet}_q(G/L_S)$ its Heckenberger--Kolb differential $*$-calculus. Then the following hold:
\begin{enumerate}
\item $\Omega^{\bullet}_q(G/L_S)$ admits a unique left $\OO_q(G)$-covariant complex structure,  
$$
\Omega^{\bullet}_q(G/L_S) \simeq  \bigoplus_{(a,b)\in\mathbb{Z}_{\geq 0}^2} \Omega^{(a,b)} =: \Omega^{(\bullet,\bullet)},
$$ 
\item $\Omega^{(\bullet,\bullet)}$ is factorisable,
\item $\Omega^{(1,0)}$ and $\Omega^{(0,1)}$ are irreducible as objects in $^{~~~\OO_q(G)}_{\OO_q(G/L_S)}\mathrm{mod}_0$.
\end{enumerate}
\end{prop}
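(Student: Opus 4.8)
The statement collects three facts about the Heckenberger--Kolb calculus $\Omega^\bullet_q(G/L_S)$. For (i), the existence and uniqueness of a covariant almost complex structure, the plan is to invoke directly the classification work of Heckenberger--Kolb \cite{HK,HKdR} together with Matassa's resolution of the conjectural cases \cite{MarcoConj}. Concretely, one uses Takeuchi's equivalence (Appendix~\ref{app:A}) to translate $\Omega^1_q(G/L_S)$ into a finite-dimensional left $\OO_q(L_S)$-comodule $\Phi(\Omega^1)$, and observes that by the HK classification this comodule decomposes as a direct sum of exactly two irreducible pieces, $\Phi(\Omega^{(1,0)})$ and $\Phi(\Omega^{(0,1)})$, which are moreover mutually non-isomorphic. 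An $\mathbb{N}_0^2$-grading of $\Omega^\bullet$ satisfying the axioms of Definition~\ref{defnnccs} is then forced on degree $1$ by this decomposition (up to swapping the two labels, which the $*$-condition \eqref{star-cond} fixes once an orientation is chosen), and in higher degrees it is determined by the algebra structure since $\Omega^\bullet$ is generated in degree $1$ as a dg-algebra. Uniqueness follows because any two such gradings must agree on degree $1$ (two irreducible non-isomorphic summands can only be grouped one way) and hence everywhere.

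For part (ii), integrability means $\exd\Omega^{(a,b)} \subseteq \Omega^{(a+1,b)}\oplus\Omega^{(a,b+1)}$, which again is part of what Heckenberger--Kolb establish: one checks it on generators using covariance of $\exd$ and the fact that $\exd$ is a comodule map, so it suffices to verify that the degree-$2$ component of $\exd$ applied to a $(1,0)$-form has no $(0,2)$-component and vice versa, which is a finite comodule-theoretic computation performed in \cite{HK}. Factorisability, i.e.\ that the wedge maps $\Omega^{(a,0)}\otimes_B\Omega^{(0,b)}\to\Omega^{(a,b)}$ and $\Omega^{(0,b)}\otimes_B\Omega^{(a,0)}\to\Omega^{(a,b)}$ are isomorphisms, follows from the classical-dimension property in Theorem~\ref{thm:HKClass}: applying $\Phi$ turns these into maps of $\OO_q(L_S)$-comodules whose source and target have equal dimension $\binom{M}{a}\binom{M}{b}=\dim\Phi(\Omega^{(a,b)})$ (here $M$ is the classical complex dimension), so it is enough to prove surjectivity or injectivity, and surjectivity holds because $\Omega^\bullet$ is generated in degree $1$. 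This is worked out in detail in \cite[\textsection6.3]{MMF2} and \cite[\textsection10]{DOSII}.

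Part (iii), irreducibility of $\Omega^{(1,0)}$ and $\Omega^{(0,1)}$ as objects in $^{\OO_q(G)}_{\OO_q(G/L_S)}\mathrm{mod}_0$, is equivalent under Takeuchi's equivalence to irreducibility of the $\OO_q(L_S)$-comodules $\Phi(\Omega^{(1,0)})$ and $\Phi(\Omega^{(0,1)})$, and this is precisely the content of the Heckenberger--Kolb description: each is the comodule associated to a single dominant weight of the Levi, hence simple. One can either quote \cite{HK,HKdR,MarcoConj} directly, or, for the $A$-series and Grassmannian cases, point to the explicit identification of these comodules in \cite{KMOS}.

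\textbf{Main obstacle.} None of the three parts requires new ideas; the proof is essentially a careful translation of the Heckenberger--Kolb classification into the language of this paper via Takeuchi's equivalence. The only genuinely delicate point is that the full classification (in particular the uniqueness of the complex structure and the classical-dimension statement) relies on Matassa's verification \cite{MarcoConj} of cases left conjectural by Heckenberger--Kolb; so the real work has already been done elsewhere, and here one simply assembles the pieces and checks that the comodule-dimension bookkeeping closes up. I would therefore present this as a short proof citing \cite{HK}, \cite{HKdR}, \cite{MarcoConj}, and \cite[\textsection10]{DOSII}, spelling out only the Takeuchi-equivalence translation and the dimension count for factorisability.
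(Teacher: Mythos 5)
Your proposal is correct and matches the paper's treatment: the paper offers no independent proof of this proposition, stating only that it follows from the results of \cite{HK}, \cite{HKdR}, and \cite{MarcoConj}, which is exactly the strategy you adopt (with some extra, harmless detail on the Takeuchi-equivalence translation and the dimension count for factorisability).
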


As observed in \cite[\textsection10.8]{MMF3} (using the same argument as presented in part (i) of Lemma \ref{lem:thethm}) there exists a real left $\OO_q(G)$-coinvariant form $\kappa \in \Omega^{(1,1)}$, and it is unique up to real scalar multiple. Moreover, by extending the representation theoretic argument given in \cite[\textsection4.4]{MMF3} for the case $\OO_q(\mathbb{CP}^n)$, the form $\kappa$ is readily seen to be a closed central element of $\Omega^{\bullet}_q(G/L_S)$. This motivated \cite[Conjecture~4.25]{MMF3}, where it was proposed that the pair $(\Omega^{(\bullet,\bullet)},\kappa)$ is a K\"ahler structure for the calculus. With suitable restrictions on the values of $q$, the conjecture was verified by Matassa in \cite[Theorem~5.10]{MarcoConj}.

\begin{thm}  \label{thm:MatassaKahler}
Let $\Om^\bullet_q(G/L_S)$ be the Heckenberger--Kolb calculus of the irreducible quantum flag manifold $\mathcal{O}_q(G/L_S)$. The pair $(\Om^{(\bullet,\bullet)},\kappa)$ is a covariant K\"ahler structure for all \mbox{$q \in \mathbb{R}_{>0}\setminus F$,} where $F$ is a finite, possibly empty, subset of $\mathbb{R}_{>0}$. Moreover, any element of $F$ is necessarily  non-transcendental.
\end{thm}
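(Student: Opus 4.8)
The plan is to reduce the statement to a single missing ingredient and then control that ingredient by a deformation argument in the parameter $q$. By Proposition~\ref{prop:complexstructure}, $\Omega^{(\bullet,\bullet)}$ is already a covariant, integrable, factorisable complex structure; and, as recalled just before the statement, $\kappa\in\Omega^{(1,1)}$ is a real, left $\OO_q(G)$-coinvariant, closed, central $(1,1)$-form, unique up to a real scalar. Since being a covariant K\"ahler structure requires only a covariant complex structure together with a central, real, coinvariant, closed $(1,1)$-form satisfying the Lefschetz isomorphism property (positive-definiteness of $g_{\kappa}$ is \emph{not} part of the definition), the whole theorem comes down to showing that, for all $q$ outside a suitable exceptional set, the maps $L_\kappa^{M-k}\colon \Omega^{k}_q(G/L_S)\to \Omega^{2M-k}_q(G/L_S)$, $L_\kappa(\omega):=\kappa\wedge\omega$, are $\OO_q(G/L_S)$-bimodule isomorphisms for each $k=0,\dots,M-1$, where $2M$ denotes the total degree of the calculus.

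First I would pass through Takeuchi's equivalence (Appendix~\ref{app:A}). Since $\kappa$ is coinvariant, $L_\kappa$ is a morphism in $^A_B\mathrm{mod}_0$, so $L_\kappa^{M-k}$ is an isomorphism if and only if $\Phi(L_\kappa)^{M-k}\colon \Phi(\Omega^{k})\to\Phi(\Omega^{2M-k})$ is an isomorphism of finite-dimensional vector spaces. By Theorem~\ref{thm:HKClass} both spaces have dimension $\binom{2M}{k}=\binom{2M}{2M-k}$, so it suffices to check injectivity; equivalently, fixing bases, the square matrix representing $\Phi(L_\kappa)^{M-k}$ has nonzero determinant $D_k$. (Since the Heckenberger--Kolb calculus carries a nondegenerate top form, one may equivalently phrase this as nondegeneracy of the Gram matrix of the pairing $(\alpha,\beta)\mapsto[\,\kappa^{M-k}\wedge\alpha\wedge\beta\,]\in\Phi(\Omega^{2M})\cong\mathbb{C}$ on $\Phi(\Omega^{k})$.) The theorem now reduces to the claim that $\prod_{k=0}^{M-1}D_k$ vanishes only on a finite set of algebraic values of $q$.

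The crux is a flat-deformation argument. I would exhibit, for the fibre algebra $\Phi\big(\Omega^\bullet_q(G/L_S)\big)$ together with its $\mathbb{N}_0^2$-grading and the line spanned by the class of $\kappa$, a presentation whose structure constants lie in $\mathbb{Q}(q)$, are regular at $q=1$, and realise all admissible $q$ simultaneously as specialisations: this is possible because the module and comodule structures of the Heckenberger--Kolb calculus, as determined in \cite{HK} and \cite{HKdR} from the $R$-matrix of $U_q(\mathfrak{g})$ on finite-dimensional representations, have structure constants that are rational functions of $q$ with rational coefficients, and because the one-dimensional space of coinvariant $(1,1)$-forms likewise descends to a rank-one free module over the base ring, spanned by a generator specialising to $\kappa$ at each admissible $q$. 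Granting this, each $D_k$ is a rational function of $q$ with rational coefficients, regular at $q=1$. At $q=1$ the family specialises to the classical exterior algebra $\Lambda^\bullet\mathfrak{m}^*$, where $\mathfrak{m}$ is the $2M$-dimensional tangent space of the compact Hermitian symmetric space $G/L_S$ at the base point, with its standard complex structure and with $\kappa$ specialising to a nonzero multiple of the canonical K\"ahler form; since that form is nondegenerate, the maps $L^{M-k}$ are isomorphisms by the symplectic hard-Lefschetz theorem, i.e.\ the $\mathfrak{sl}_2$-representation theory of $\Lambda^\bullet\mathfrak{m}^*$. Hence $D_k(1)\neq 0$, so $D_k$ is not identically zero and its zero set is finite and consists of algebraic numbers. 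Taking $F$ to be the intersection with $\mathbb{R}_{>0}$ of the finite union of the zero sets of $D_0,\dots,D_{M-1}$ completes the argument; $F$ may be empty, as it already is for $\OO_q(\mathbb{CP}^n)$ and the quantum Grassmannians.

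I expect the genuine obstacle to be the deformation step just described: the Heckenberger--Kolb calculus is pinned down by a uniqueness theorem rather than by a uniform presentation, so one must actually build the $\mathbb{Q}[q,q^{-1}]$-family and check both that its classical limit is the de Rham picture above, with the correct complex structure, and that $\kappa$ (once its normalisation is fixed, say through the Hodge map or the integral) varies rationally in $q$ rather than jumping between values. A cleaner but weaker route bypasses specialisation altogether: working over $\mathbb{C}(q)$ one obtains the Lefschetz isomorphisms for every transcendental $q$ immediately, since a nonzero rational function cannot vanish at a transcendental point, and the sharpening ``the exceptional $q$ are algebraic'' is then exactly the statement that the $D_k$ lie in $\mathbb{Q}(q)$, so that clearing denominators produces a polynomial over $\mathbb{Q}$ whose finitely many real positive roots constitute $F$.
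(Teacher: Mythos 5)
The paper does not prove this theorem itself: it is quoted from Matassa \cite[Theorem~5.10]{MarcoConj}, and your outline follows essentially the same strategy as that reference --- reduce, via Takeuchi's equivalence and the dimension count of Theorem~\ref{thm:HKClass}, to non-vanishing of the determinants of the Lefschetz powers on the finite-dimensional fibres, note that these determinants are rational functions of $q$ with rational coefficients whose value at the classical point $q=1$ is non-zero by hard Lefschetz for the compact Hermitian symmetric space, and conclude that the exceptional set is finite and consists of non-transcendental values. The step you yourself flag as the genuine obstacle (exhibiting a uniform $\mathbb{Q}(q)$-family of the calculus with the correct classical limit and a rationally varying normalisation of $\kappa$) is precisely the technical content supplied in \cite{MarcoConj} through explicit bases coming from the Heckenberger--Kolb construction, so your reduction is sound but the argument is complete only modulo that construction.
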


The question of when this K\"ahler structure gives a CQH-K\"ahler space was addressed \cite[Theorem 6.1]{CQHKS}. Taken together with irreducibility of the holomorphic forms (as recalled in part (iii) of Proposition \ref{prop:complexstructure} above), this gives us the following theorem. (Note that in the theorem we need to restrict to a strictly positive cone of real $\OO_q(G)$-coinvariant $(1,1)$-forms to get positivity, and we denote an arbitrary K\"ahler form in this cone by $\kappa_+$.)

\begin{thm} \label{thm:PosDefKappa}
For each irreducible quantum flag manifold $\mathcal{O}_q(G/L_S)$, there exists a real left $\OO_q(G)$-coinvariant $(1,1)$-form $\kappa_+$, uniquely defined up to strictly positive real multiple, such that a  CQH-K\"ahler space is given by the quadruple
$$
\mathbf{K}_S := \left(\OO_q(G/L_S), \, \Omega^{\bullet}_q(G/L_S), \,\Omega^{(\bullet,\bullet)}, \kappa_+ \right)\!,
$$
for all $q \in I$, where $I \sseq  \mathbb{R}_{>0}$ is a sufficiently small open interval around $1$.
\end{thm}

In the rest of this section we build upon this result, using it to apply the general framework of the paper to the study of the irreducible quantum flag manifolds.

\subsection{Positive and Negative Line Modules over \texorpdfstring{$\OO_q(G/L_S)$}{Oq(G/LS}} \label{section:BWPosit}

It was shown in~\cite[Theorem 4.5]{DOKSS} that each relative Hopf module over any irreducible quantum flag manifold $\OO_q(G/L_S)$ admits a unique relative holomorphic structure. Let us now recall the precise statement of this result for the special case of line modules.

\begin{thm}\label{thm:holomorLNES}
For every relative line module $\mathcal{E}_k$ over $\mathcal{O}_q(G/L_S)$, there exists a unique covariant $(0,1)$-connection 
\begin{align*}
\overline{\partial}_{\mathcal{E}_k}: \mathcal{E}_k \rightarrow \Omega^{(0,1)}_q(G/L_S) \otimes_{\mathcal{O}_q(G/L_S)} \mathcal{E}_k.
\end{align*}
Moreover, $\overline{\partial}_{\mathcal{E}_k}$ is flat, which is to say, it is a holomorphic structure.
\end{thm}

It now follows directly from Theorem~\ref{thm:threeway} that the relative line modules over the irreducible quantum flag manifolds $\OO_q(G/L_S)$ are either positive, flat, or negative. To differentiate between these possibilities, we will use the cohomological information given by the noncommutative Borel--Weil theorem for $\OO_q(G/L_S)$ established  in \cite{IrrBW, KMOS}.

\begin{thm} \label{thm:BorelWeil}
For any irreducible quantum flag manifold $\OO_q(G/L_S)$, it holds that 
\begin{enumerate}
 \item $H^0(\mathcal{E}_k)$ is an irreducible $U_q(\mathfrak{g})$-module of highest weight $k\varpi_{x}$, for all $k \in \mathbb{Z}_{\geq 0}$,
 \item $H^0(\mathcal{E}_{-k}) = 0$, for all $k \in \mathbb{Z}_{>0}$,
\end{enumerate}
where $\Pi\backslash S$ consists of the simple root $\alpha_x$.
\end{thm}

Using this cohomological information, we can now apply Theorem~\ref{thm:threeway} to determine which modules are positive and which are negative.

\begin{thm} \label{thm:posnegLINEs}   For any irreducible quantum flag manifold $\OO_q(G/L_S)$, and any  $k \in \mathbb{Z}_{>0}$, 
\begin{enumerate}
\item $\mathcal{E}_k > 0$,
\item $\mathcal{E}_{-k} <0$,
\end{enumerate}
for any  K\"ahler structure identified in Theorem \ref{thm:PosDefKappa}.
\end{thm}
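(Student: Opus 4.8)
The plan is to apply Corollary~\ref{cor:cohomology.pos.neg}, which reduces the problem to computing the degree-zero holomorphic and anti-holomorphic Dolbeault cohomology groups $H^0_{\adel}(\EE_k)$ and $H^0_{\del}(\EE_k)$ of each line bundle. First I would invoke the hypotheses: by Theorem~\ref{thm:MatassaKahler} (together with positive-definiteness, as recalled before Theorem~\ref{thm:posnegLINEs}) we may restrict $q$ to the interval $I$ on which $\mathbf{K}_S = (\OO_q(G/L_S), \Omega^\bullet_q(G/L_S), \Omega^{(\bullet,\bullet)}, \kappa)$ is an irreducible CQH-K\"ahler space, and by Theorem~\ref{thm:holomorLINES} each $\EE_k$ carries its unique covariant holomorphic structure $\adel_{\EE_k}$. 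Thus all the standing assumptions of Corollary~\ref{cor:cohomology.pos.neg} are in force, and it remains only to check the relevant cohomological (non)vanishing.

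For part (i), fix $k \in \mathbb{N}$. The Borel--Weil theorem, Theorem~\ref{thm:BorelWeil}(i), gives $H^0_{\adel}(\EE_k) \simeq V_{k\varpi_s} \neq 0$. To apply Corollary~\ref{cor:cohomology.pos.neg}(i) I also need $H^0_{\del}(\EE_k) = 0$. The key observation is that $H^0_{\del}(\EE_k)$ can be identified with an anti-holomorphic cohomology group of the dual bundle via the $*$-structure: since $(\EE_k)^* \simeq \EE_{-k}$ and $\del(\omega^*) = (\adel\omega)^*$, conjugation induces an (anti-linear) isomorphism $H^0_{\del}(\EE_k) \cong \overline{H^0_{\adel}(\EE_{-k})}$. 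By Theorem~\ref{thm:BorelWeil}(ii), $H^0_{\adel}(\EE_{-k}) = 0$ for $k>0$, so $H^0_{\del}(\EE_k) = 0$ as required, and Corollary~\ref{cor:cohomology.pos.neg}(i) yields $\EE_k > 0$. For part (ii), I would argue symmetrically, now using $\EE_{-k}$: Theorem~\ref{thm:BorelWeil}(ii) gives $H^0_{\adel}(\EE_{-k}) = 0$, while the conjugation argument together with Theorem~\ref{thm:BorelWeil}(i) gives $H^0_{\del}(\EE_{-k}) \cong \overline{H^0_{\adel}(\EE_k)} \simeq \overline{V_{k\varpi_s}} \neq 0$; hence Corollary~\ref{cor:cohomology.pos.neg}(iii) gives $\EE_{-k} < 0$.

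The only genuinely delicate point is the compatibility of the $*$-operation with the Dolbeault complexes of the twisted bundles, i.e.\ making precise the isomorphism $H^0_{\del}(\EE_k) \cong \overline{H^0_{\adel}(\EE_{-k})}$. Everything else is bookkeeping: the relation $(\EE_l)^* \simeq \EE_{-l}$ is recalled in \textsection\ref{subsection:LBs}, and the identities $\del(\omega^*) = (\adel\omega)^*$ hold at the level of forms by the discussion following Definition~\ref{defnnccs}. One must check that the Hermitian structure $h(e_1,e_2) = e_1 e_2^*$ on $\EE_l$ intertwines the holomorphic structure on $\EE_k$ with the anti-holomorphic structure on $\EE_{-k}$ under conjugation --- this follows from uniqueness of the covariant $(0,1)$-connection in Theorem~\ref{thm:holomorLINES}, since the conjugate of $\adel_{\EE_{-k}}$ is a covariant $(1,0)$-connection on $\EE_k$ whose $(0,1)$-part must agree with $\adel_{\EE_k}$ by Lemma~\ref{lem:thethm}(i). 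Alternatively, and perhaps more cleanly, one can sidestep $H^0_{\del}$ entirely and run the argument through the opposite CQH-K\"ahler space $\overline{\mathbf{K}}_S$, in which the roles of $\del$ and $\adel$ are exchanged, applying the Borel--Weil theorem for $\overline{\mathbf{K}}_S$; the last sentence of the statement (``for choice of positive definite K\"ahler form $\kappa$'') reminds the reader that positivity is with respect to a fixed $\kappa$, and that replacing $\kappa$ by $-\kappa$ interchanges positive and negative, consistent with the $\EE_k \leftrightarrow \EE_{-k}$ symmetry.
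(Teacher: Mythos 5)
Your proposal is correct and follows exactly the route the paper intends: the paper states this theorem without a written proof, as an immediate application of Theorem~\ref{thm:threeway} and Corollary~\ref{cor:cohomology.pos.neg} to the Borel--Weil data of Theorem~\ref{thm:BorelWeil}, and your conjugation step $H^0_{\del}(\EE_k)\cong\overline{H^0_{\adel}(\EE_{-k})}$ (equivalently, arguing via the opposite CQH-K\"ahler space, as the paper itself does in the proof of Corollary~\ref{cor:cohomology.pos.neg}(iii)) is precisely the detail left implicit there. One small fix: your phrase about the ``$(0,1)$-part'' of the conjugated $(1,0)$-connection does not parse, since a $(1,0)$-connection has no $(0,1)$-part; what you actually need is that $\adel_{\EE_k}$ together with the conjugate of $\adel_{\EE_{-k}}$ assembles into an Hermitian connection on $(\EE_k,h)$, so that uniqueness of the Chern connection identifies the conjugated piece with $\del_{\EE_k}$.
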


\begin{eg} \label{PodlesCurvature}
For any  differential manifold the curvature of a connection is additive over tensor products of vector bundles. In particular, any tensor power of a positive line bundle over a complex manifold is again positive. In the noncommutative setting tensoring two holomorphic modules is more problematic, and one needs to consider bimodule connections (in the sense of \cite{DVMadoreMouradBimodule,MadoreBimodule,DVMichor,BimoduleConn}). Even in this case, curvature does not behave additively. In particular, for a bimodule holomorphic line module $\EE$, we cannot directly conclude positivity of $\EE^{\otimes_B k}$ from positivity of $\EE$, making the general approach of this paper all the more valuable.

For the case of the standard Podle\'s sphere $\OO_q(S^2)$ the curvature can be explicitly calculated using the theory of quantum principal bundles, see \cite[Example 5.23]{BeggsMajid:Leabh} for details. In the conventions of this paper, for any line module $\EE_k$ over $\mathcal{O}_q(S^2)$, it holds that 
\begin{align*}
\nabla^2(e) =  -(k)_{q^{-2}} \mathbf{i} \kappa \otimes e, & & \textrm{ for all } e \in \mathcal{E}_k,
\end{align*}
where the quantum integer is given explicitly by
$
(k)_{q^{-2}} := 1 + q^{-2} + q^{-4} + \cdots + q^{-2(k-1)},
$
and we have chosen the unique K\"ahler form $\kappa$ satisfying
\begin{align} \label{eqn:firstCPNChern}
\nabla^2(e) = - \mathbf{i} \kappa \otimes e, & & \text{for all }   e \in \EE_1.
\end{align}
This suggests that there is some type of $q$-deformed (or braided) additivity underlying these results. Understanding this process presents itself as an interesting and important future goal.
\end{eg}


\subsection{A Fano Structure for the Irreducible Quantum Flag Manifolds} \label{section:HKFano}

Building on the definition of K\"ahler structure, the notion of a noncommutative Fano structure was introduced in \cite[\textsection 8.2]{OSV}, directly generalising the classical definition of a Fano manifold. Moreover, for any CQH-Fano space $\mathbf{F}$ (defined in the obvious way as a refinement of the definition of a CQH-K\"ahler structure) the noncommutative Kodaira vanishing theorem established in \cite[Theorem 8.3]{OSV} implies that the antiholomorphic cohomology $H^{(0,\bullet)}(\EE)$ of any  positive line module $\EE$ over $\mathbf{F}$ will be concentrated in degree zero. 

\begin{defn} \label{def:fano}
A {\em  Fano structure} for a differential $*$-calculus $\Omega^{\bullet}$, of total degree $2n$, is a factorisable K\"ahler structure $(\Omega^{(\bullet,\bullet)},\kappa)$ such that the  holomorphic Hermitian module
$
\big(\Omega^{(n,0)}, g_{\kappa}, \adel \big)
$
is negative.
\end{defn}

We now verify the Fano condition for the irreducible quantum flag manifolds. Besides being an interesting result in its own right, it is also a necessary step for the proof of the Bott--Borel--Weil theorem for the irreducible quantum flag manifolds \cite[\textsection 9]{OSV}. We note first that since a line module $\EE_l$ is negative if and only if $l$ is a negative integer, verifying the Fano condition amounts to showing that $\Omega^{(M,0)} \simeq \EE_{-k}$, for some $k > 0$, where we recall that $M$ denotes the complex dimension of the corresponding classical manifold $G/L_S$. This we  do by producing a general description of $k$ in terms of the Cartan matrix of $\mathfrak{g}$, for the special case of $q 
\in I$, where $I$ is as defined in Theorem \ref{thm:PosDefKappa}. (In Table~\ref{table:CQFMsEk} of Appendix~\ref{app:B}, we present the explicit values of $k$ for each series of the irreducible quantum flag manifolds.)  

\begin{thm} \label{thm:HKFano} Let $\OO_q(G/L_S)$ be an irreducible quantum flag manifold, endowed with its Heckenberger--Kolb calculus $\Omega_q(G/L_S)$. For any
$q \in I$, the pair $(\Omega^{(\bullet,\bullet)},\kappa_+)$ is a Fano structure.
\end{thm}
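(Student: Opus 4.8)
The plan is to reduce the Fano property to the identification of the canonical bundle $\Omega^{(M,0)}_q(G/L_S)$ with some negative line bundle $\EE_{-k}$, $k>0$, and then to apply Theorem~\ref{thm:posnegLINEs}. Since Theorem~\ref{thm:MatassaKahler} and Proposition~\ref{prop:complexstructure} already tell us that, for the relevant $q$, the pair $(\Omega^{(\bullet,\bullet)},\kappa)$ is a covariant, factorisable K\"ahler structure of total degree $2M$, the first condition of Definition~\ref{def:fano} (factorisability) is immediate. It remains to establish condition~(ii): that $(\Omega^{(M,0)},g_\kappa)$, equipped with its natural Hermitian structure and the holomorphic structure $\wedge^{-1}\circ\adel$, is a \emph{negative} holomorphic Hermitian line bundle.

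First I would verify that $\Omega^{(M,0)}_q(G/L_S)$ is indeed a covariant line bundle in the sense of \S\ref{subsect:covHERMstructure}, i.e.\ that $\dim\Phi(\Omega^{(M,0)}) = 1$; this follows from Theorem~\ref{thm:HKClass}, since $\dim\Phi(\Omega^{M}_q(G/L_S)) = \binom{2M}{M}$ and factorisability forces $\Omega^{(M,0)}$ to be the top exterior power of the irreducible $M$-dimensional comodule $\Phi(\Omega^{(1,0)})$, hence one-dimensional. By Lemma~\ref{lem:Ek}, every covariant line bundle over $\OO_q(G/L_S)$ is of the form $\EE_l = \Psi(W_{l\varpi_s})$ for a unique $l\in\mathbb{Z}$, so there is a unique $k=-l$ with $\Omega^{(M,0)}\simeq\EE_{-k}$. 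The core computation is to pin down this $k$ at the level of $U_q(\fl_S)$-modules: under Takeuchi's equivalence $\Phi(\Omega^{(M,0)})$ is the top exterior power of $\Phi(\Omega^{(1,0)})$, which as a $U_q(\fl_S)$-module is the cotangent module at the base point; one computes its $\fc$-weight (equivalently, the eigenvalue of the central element $Z$) by summing the weights of a weight basis of $\Phi(\Omega^{(1,0)})$. Classically this sum is the weight $-2\rho + 2\rho_S = -(\text{sum of roots in }\Delta^+\setminus\Delta_S^+)$ appearing in the canonical bundle of $G/L_S$; in the quantum case the same weight computation goes through because the $K_i$-action on the Heckenberger--Kolb $(1,0)$-forms is given by the classical weights (the calculus being a flat deformation). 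Expressing this weight as an integer multiple of $\varpi_s$ modulo $\mathcal{P}_S$, and using $\det(A)\varpi_s = a_1\alpha_1 + \dots + a_r\alpha_r$ together with the definition of $Z$, produces the explicit value of $k$ recorded in Table~\ref{table:CQFMsEk}; in particular $k>0$ because the sum of the positive roots outside $\Delta_S^+$ has strictly positive $\varpi_s$-coefficient.

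Once $\Omega^{(M,0)}\simeq\EE_{-k}$ with $k>0$ is established, I would invoke Theorem~\ref{thm:posnegLINEs}(ii), which gives $\EE_{-k}<0$, i.e.\ there exists $\theta\in\mathbb{R}_{>0}$ with $\nabla^2(e) = \theta\mathbf{i}\kappa\otimes e$ for the Chern connection on $\EE_{-k}$. Transporting this isomorphism of covariant Hermitian holomorphic line bundles (the Hermitian metric on $\Omega^{(M,0)}$ being $g_\kappa$, which by Remark after Definition~\ref{defn:positivedefHerm} and covariance is the unique covariant one up to positive scalar, hence matches the one on $\EE_{-k}$) shows $(\Omega^{(M,0)},g_\kappa,\wedge^{-1}\circ\adel)$ is a negative holomorphic Hermitian line bundle. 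This is exactly condition~(ii) of Definition~\ref{def:fano}, so $(\Omega^{(\bullet,\bullet)},\kappa)$ is a Fano structure.

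The main obstacle I expect is the weight/degree bookkeeping in the second paragraph: one must be careful that the $\OO_q(G)$-comodule structure on $\Omega^{(1,0)}$ translates, under Takeuchi's equivalence $\Phi$, into the correct $U_q(\fl_S)$-module whose $\fc$-weight is read off by the central element $Z = K_{g^{-1}\det(A)\varpi_s}$, and that the normalisations ($g$, $\det(A)$, the distinction between $\mathcal{P}$ and $\mathcal{P}_S$, and the subtlety flagged in Remark~\ref{remark:Uq(gP)} about $U(\fl_S^{\,s})\otimes U(\fc)\hookrightarrow U(\fl_S)$ being proper) all line up so that the resulting $k$ is a genuine positive integer and agrees case-by-case with Table~\ref{table:CQFMsEk}. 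Everything else is either quoted from earlier results or is a formal consequence of factorisability and uniqueness of covariant structures.
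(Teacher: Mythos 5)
Your proposal is correct and takes essentially the same route as the paper: reduce to condition (ii) of the Fano definition, identify $\Omega^{(M,0)} \simeq \EE_{-k}$ with $k>0$ by computing the action of the central element $Z$ on $\Phi(\Omega^{(M,0)})$ via $\Phi(\Omega^{(1,0)})$ under Takeuchi's equivalence, and then invoke Theorem~\ref{thm:posnegLINEs}. The only (cosmetic) difference is in the weight bookkeeping: the paper computes the single $Z$-eigenvalue on the irreducible module $\Phi(\Omega^{(1,0)})$ explicitly on the basis $[\del z_{iN}]$ and uses the embedding $\Phi(\Omega^{(M,0)})\hookrightarrow \Phi(\Omega^{(1,0)})^{\otimes M}$ to obtain $k = M/(A^{-1})_{ss} > 0$, whereas you sum the classical weights of the $(1,0)$-forms and argue positivity root-theoretically.
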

\begin{proof}
Since the complex structure $\Omega^{(\bullet, \bullet)}$ is factorisable, we only need to verify condition (ii) of Definition~\ref{def:fano}, that is, show that  $(\Omega^{(M,0)}, g_{\kappa_+}, \adel)$ is a negative line module, where $2M$ is the total degree of $\Omega_q^\bullet(G/L_S)$. First, we will identify the unique $k \in\mathbb{Z}_{>0}$ such that 
\begin{align} \label{eqn:antiHoloEE}
  \Phi\big(\Omega^{(M,0)}\big) \simeq \Phi(\EE_{-k}).
\end{align}
To do so, we will compare the actions of the central element $Z$ on the \mbox{$U_q(\mathfrak{l}_S)$-modules} $\Phi(\EE_{-k})$ and $\Phi(\Omega^{(M,0)})$. Now since $\Phi(\Omega^{(1,0)})$ is  irreducible as a $U_q(\mathfrak{l}_S)$-module, $Z$ acts on $\Phi(\Omega^{(1,0)})$ as multiplication by some scalar~$\gamma$.
For any $z_{ij} \in \OO_q(G/L_S)$ we see that 
\begin{align} \label{eqn:Zadelzij}
[\adel z_{ij}] \triangleleft Z = \left[\del(c^{\varpi_x}_{f_i\triangleleft Z, v_N}c^{-w_0(\varpi_x)}_{v_j \triangleleft Z,f_N})\right] = q^{-(\varpi_x,\mathrm{wt}(f_i) + \mathrm{wt}(v_j))\det(A)}[\adel z_{ij}].
\end{align}
As shown in~\cite[Proposition~3.6]{HKdR}, a basis of~$\Phi(\Omega^{(1,0)})$ is given by 
$$
\left\{[\del z_{iN}] \,| \, \text{ for } i\in J_{(1)}\right\}\!,
$$
where $J_{(1)}$ is the subset of the index set $J:= \{1,\ldots,\dim(V_{\varpi_x})\}$ defined by
$$
J_{(1)} := \{ i\in J \mid (\varpi_x, \mathrm{wt}(v_i))  = (\varpi_x,\varpi_x -\alpha_x)\}.
$$ 
Now for any $i\in J_{(1)}$, we have
\begin{align*}
    (\varpi_x,\mathrm{wt}(f_i) + \mathrm{wt}(v_N)) = (\varpi_x, - \varpi_x + \alpha_x) + (\varpi_x,\varpi_x) = (\varpi_x, \alpha_x).
\end{align*}
Thus we see that $\gamma = q^{-(\varpi_x,\alpha_x)\det(A)}$.
Since the category of $U_q(\mathfrak{l}_S)$-modules is semisimple, the projection from $\Phi(\Omega^{(1,0)})^{\otimes M}$ to $\Phi(\Omega^{(M,0)})$ splits, meaning that $\Phi(\Omega^{(M,0)})$ can be considered as a submodule of $\Phi(\Omega^{(1,0)})^{\otimes M}$. Thus we see that $Z$ must act on~$\Phi(\Omega^{(M,0)})$ as multiplication by $q^{-M (\varpi_x,\alpha_x)\det(A)}$.

The definition of $\EE_k = \Psi(W_{k\varpi_x})$ implies that $Z$ acts on $\Phi(\EE_k)$ as the scalar $q^{k(\varpi_x,\varpi_x)\det(A)}$. Thus it follows from \eqref{eqn:antiHoloEE} that $k$ is uniquely determined by 
$$
 M (\varpi_x,\alpha_x) \det(A)  =  k (\varpi_x,\varpi_x)\det(A).
$$
Recalling the standard identity
$$
(\varpi_x,\varpi_x) =  \frac{(\alpha_x,\alpha_x)}{2}(A^{-1})_{xx}, 
$$
where $(A^{-1})_{xx}$ denotes the $x$-diagonal entry of the inverse of the Cartan matrix, we see
\begin{align*} 
k = \frac{ M(\varpi_x,\alpha_x)}{(\varpi_x,\varpi_x)} = \frac{ 2M(\varpi_x,\alpha_x)}{(\alpha_x,\alpha_x)(A^{-1})_{xx}} = \frac{ M(\varpi_x,\alpha_x^{\vee})}{(A^{-1})_{xx}} = \frac{M}{(A^{-1})_{xx}}.
\end{align*}
(We note that since $k$ is by definition an integer, the rational number $M/(A^{-1})_{xx}$ is necessarily an integer, as can be confirmed by direct investigation. See Table~\ref{table:CQFMsEk} of Appendix~\ref{app:B} for explicit values.) It remains to show that $k > 0$. Indeed, each entry of the matrix $(A^{-1})_{xx}$ is a positive rational number (see, for example, \cite[Table~2]{VinbergOnishchik} for an explicit presentation of the values). Thus, for every irreducible quantum flag manifold $\OO_q(G/L_S)$, the scalar $k$ must be an element of $\mathbb{Z}_{>0}$.
\end{proof}

\appendix

\section{Some Categorical Equivalences} \label{app:A}

In this appendix we present a number of categorical equivalences, all ultimately derived from Takeuchi's equivalence \cite{Tak}. These equivalences play a prominent role in the paper, giving us a formal framework in which to understand covariant differential calculi as relative Hopf modules.

\subsection{Takeuchi's  Equivalence} \label{app:TAK}

This subsection concisely presents Takeuchi's equivalence for relative Hopf modules, as originally established in \cite{Tak}, and developed in \cite{WignerMas}. We also present the monoidal version, as considered in \cite[\textsection 4]{MMF2}, and then restrict to the finitely generated subcategory of relative Hopf modules, as considered, for example, in \cite[Corollary 2.5]{MMF3}.

Let $\pi : A \to H$ be a surjective Hopf algebra map between Hopf algebras $A$ and $H$. Then a \emph{homogeneous right $H$-coaction} is given by the map 
$
\Delta_R := (\id \otimes \pi) \circ \Delta : A \to A \otimes H.
$
The associated \emph{quantum homogeneous space} is defined to be the space of coinvariant elements, $A^{\co(H)}$, that is, \[ A^{\co(H)} := \{ a \in A \mid \Delta_R (a) = a \otimes 1 \}.\]

For any quantum homogeneous space $B = A^{\co(H)}$, we define  $^A_B\mathrm{Mod}_B$ to be the category whose  objects are left \mbox{$A$-comodules} \mbox{$\DEL_L:\mathcal{F} \to A \otimes \mathcal{F}$}, endowed with a $B$-bimodule structure such that $\DEL_L(bfc) = \Delta_L(b)\DEL_L(f)\Delta(c)$,  for all  $f \in \mathcal{F}, \, b,c \in B$, and whose morphisms  are left $A$-comodule, $B$-bimodule, maps.

Let $^H\!\mathrm{Mod}_B$ denote the category of \emph{relative Hopf modules}, that is, the category whose objects are left $H$-comodules $\Delta_L: V \to H \otimes V$, endowed with a right $B$-module structure, such that $\Delta_L(vb) = v_{(-1)}\pi(b_{(1)}) \otimes v_{(0)}b_{(2)}$, for all $v \in V, \, b \in B$, and whose morphisms are left $H$-comodule maps and right $B$-module maps.

Consider the functor $\Phi:{}^A_B\mathrm{Mod}_B \to {}^{H}\mathrm{Mod}_B$, given by $\Phi(\mathcal{F}) :=   \mathcal{F}/B^+\mathcal{F}$, where the left $H$-comodule structure of $\Phi(\mathcal{\F})$ is given by 
$
\Delta_L[f] := \pi(f_{(-1)})\otimes [f_{(0)}],
$
with square brackets denoting the coset of an element in $\Phi(\mathcal{\F})$. In the other direction, we define a functor $\Psi: {}^{H}\mathrm{Mod}_B \to {}^A_B\mathrm{Mod}_B$ by setting $\Psi(V) := A \,\square_{H} V$, where the left $A$-comodule structures of $\Psi(V)$ is defined on the first tensor factor, the right $B$-module structure is the diagonal one, and if $\gamma$ is a morphism in ${}^{H}\mathrm{mod}_B$, then $\Psi(\gamma) := \id \otimes \gamma$. 

An adjoint equivalence of categories between~${}^A_B\mathrm{Mod}_B$ and~${}^{H}\mathrm{Mod}_B$, which we call \emph{Takeuchi's equivalence}, is given by the functors $\Phi$ and $\Psi$, and the unit natural isomorphism
\begin{align*}
\unit: \F \to \Psi \circ \Phi(\F), & & f \mapsto f_{(-1)} \otimes [f_{(0)}].
\end{align*}
The \emph{dimension} $\mathrm{dim}(\F)$ of an object $\F \in {}^A_B\mathrm{Mod}$ is the vector space dimension of $\Phi(\F)$. As observed in \cite[Corollary 2.7]{MMF2}, the inverse of the unit $\unit$ of the equivalence admits a useful explicit description: 
\begin{align} \label{eqn:unitinverse} 
\unit^{-1}\!\left(\sum_i f_i \otimes [g_i] \right) = \sum_i f_iS\big((g_i)_{(-1)}\big)(g_i)_{(0)}.
\end{align}

Consider $^A_B\textrm{Mod}_0$ the full subcategory of ${}^A_B\mathrm{Mod}_B$ whose objects $\F$ satisfy $B^+\F = \F B^+$. The corresponding full subcategory ${}^H\mathrm{Mod}_0$ of ${}^H\mathrm{Mod}_B$ is given by objects with the trivial right $B$-action. The category $^A_B\textrm{Mod}_0$ comes equipped with a monoidal structure given by the tensor product $\otimes_B$. Moreover, with respect to the obvious monoidal structure on ${}^H\mathrm{Mod}_0$, Takeuchi's equivalence is readily endowed with the structure of a monoidal equivalence (see \cite[\textsection 4]{MMF2}). An immediate implication is that an object $\EE$ is invertible (that is, it is a relative line module) if and only if $\dim(\EE) = 1$.

Finally, we consider $^A_B\textrm{mod}_0$ the full subcategory of ${}^A_B\mathrm{Mod}_0$ whose objects are finitely generated as left $B$-modules, and note that it is a monoidal subcategory of ${}^A_B\mathrm{Mod}_0$. The corresponding full subcategory ${}^H\mathrm{mod}$ of ${}^H\mathrm{Mod}$ has as objects the finite-dimensional left $H$-comodules, and its monoidal structure is the usual tensor product of comodules.

\subsection{Conjugates and Duals}

Let us now assume that $A$ and $H$ are Hopf $*$-algebras, and that $\pi:A \to H$ is a Hopf $*$-algebra map. For any relative Hopf module $\F$, let $\overline{\F}$ be the relative Hopf module whose bimodule structure is defined by  $b\overline{f}c = \overline{c^*fb^*}$, and whose left $A$-comodule  structure is defined by $\Delta_L(\overline{f}) := (f_{(-1)})^* \otimes \overline{f_{(0)}}$. Note that as a right $B$-module, $\overline{\F}$ is isomorphic to the conjugate of $\F$ as defined in \ref{subsection:HVBS}, justifying the choice of notation. As shown in \cite[Corollary 2.11]{OSV},  if  $\F \in \, ^A_B\textrm{Mod}_0$, then $\overline{\F} \in \, ^A_B\textrm{Mod}_0$. It is instructive to note that  the corresponding operation, through Takeuchi's equivalence, on any object in $V \in \,^H\textrm{Mod}_0$ is the usual complex conjugate of  $V$ coming from the  Hopf $*$-algebra structure of $H$.

We now restrict to the categories $^A_B\mathrm{mod}_0$ and $^H\mathrm{mod}$, and discuss dual objects. Since $^H\mathrm{mod}$ is a rigid monoidal category, $^A_B\mathrm{mod}$, or equivalently $^A_B\mathrm{mod}_0$, is a rigid monoidal category. In particular, every object $\F \in \, ^A_B\mathrm{mod}_0$ admits a dual object, which we denote by ${}^{\vee}\!\F$. Moreover, as shown in  \cite[Appendix A]{OSV}, by extending its usual bimodule structure, we can give $\, _{B}\mathrm{Hom}(\F,B)$   the structure of an object in $\,^A_B\mathrm{Mod}_B$, with respect to which it is right dual to $\F$. Now $^A_B\mathrm{mod}_0$ is a monoidal subcategory of $\,^A_B\mathrm{Mod}_B$. Thus since right duals are unique up to unique isomorphism,  ${}^{\vee}\!\F$ must be isomorphic to $\, _{B}\mathrm{Hom}(\F,B)$, justifying the abuse of notation. Finally, we note that if $H$ is a CQGA, then for any $V \in \,^H\mathrm{mod}_0$, its dual and conjugate are always isomorphic, see \cite[Theorem 11.27]{KSLeabh} for details.


\section{A Remark on the Definition of the Quantum Levi Subgroup} \label{app:Levi}

In this subsection we show the image of the restriction map $\pi_S$ defined in \textsection \ref{subsection:QFM} is the type-$1$ dual of $U_q(\mathfrak{l}_S)$, for any subset of simple nodes of a semisimple Lie algebra $\mathfrak{g}$. In fact, this is taken as the definition of $\OO_q(L_S)$ in \cite{DijkStok}, and is the definition used in \cite{HK,HKdR,DOKSS}. This result is a basic exercise in representation theory, and can be directly concluded from \cite[Lemma 2.1]{HK:BGGdR}. We include a proof for the reader's convenience. 

\begin{prop}
Let $S$ be a subset of the simple roots $\Pi$ of $\mathfrak{g}$, then the Hopf $*$-algebra map $\pi_S:\OO_q(G) \to \OO_q(L_S)$ is surjective.
\end{prop}
\begin{proof}
Take an arbitrary finite-dimensional type-$1$ irreducible $U_q(\mathfrak{g})$-module $V$ and decompose it into irreducible type-$1$  $U_q(\mathfrak{l}_S)$-submodules:
$$
V \simeq \bigoplus_{\mu \, \in \mathcal{P}^+ \cup \mathcal{P}_{S^c}} W_{\mu}.
$$
This gives a corresponding coordinate coalgebra decomposition 
    \begin{align} \label{eqn:branching}
    \pi_S(C(V_{\lambda})) \simeq \bigoplus_{\mu \in \mathcal{P}^+ \cup \mathcal{P}_{S^c}} C(W_{\mu}).
    \end{align}
Thus we see that the image of $\pi_S$ is contained in $\OO_q(L_S)$, the type-$1$ dual of $U_q(\mathfrak{l}_S)$. 
    
To prove surjectivity we need to show that for every weight  $\nu \in \mathcal{P}^+ \cup \mathcal{P}_{S^c}$, the coordinate algebra $C(W_{\nu})$ is contained in the image of $\pi_S$. By \eqref{eqn:branching}, this would follow from a demonstration that every $W_{\nu}$ appears as a $U_q(\mathfrak{l}_S)$-submodule of some $U_q(\mathfrak{g})$-module. 
        
Every element of $\mathcal{P}^+ \cup \mathcal{P}_{S^c}$ is a sum of weights of the form $\lambda  + \mu$, where 
\begin{align*}
\lambda \in \mathcal{P}^+, & & \textrm{ and } & & -\mu \in \mathcal{P}_{S^c} \cap \mathcal{P}^+.
\end{align*} 
Choose a highest weight vector $v_{\hw}$ in the irreducible $U_q(\mathfrak{g})$-module $V_{\lambda}$, and choose a lowest weight vector $v_{\lw}$ in the irreducible $U_q(\mathfrak{g})$-module $V_{w_0(\mu)}$. We see that since
$$
v_{\hw} \otimes v_{\lw} \in V_{\lambda} \otimes V_{\mu}
$$
is a $U_q(\mathfrak{l}_S)$-highest weight vector, $U_q(\mathfrak{l}_S)v_{\hw} \otimes v_{\lw}$ is an irreducible  $U_q(\mathfrak{l}_S)$-submodule of $V_{\lambda} \otimes V_{w_0(\mu)}$ of highest weight $\lambda + \mu$. Thus we see that $C(W_{\lambda+\mu})$ is contained in the image of $\pi_S$, and hence that $\pi_S$ is surjective.
\end{proof}


\section{Tables for the Irreducible Quantum Flag Manifolds} \label{app:B}

We recall the standard pictorial description of the quantum Levi subalgebras defining the irreducible quantum flag manifolds, given in terms of Dynkin diagrams. 

\begin{center}
  \begin{table}[ht]
    \captionof{table}{\small{Irreducible Quantum Flag Manifolds: organised by series,  with defining  crossed node numbered according to \cite[\textsection11.4]{Humph}, CQGA-homogeneous space symbol and name}} \label{table:CQFMs}
{\small \renewcommand{\arraystretch}{2}%
 \begin{tabular}{|c|c|c|C{4cm}|}

\hline

\small $A_n$&
\begin{tikzpicture}[scale=.5]
\draw
(0,0) circle [radius=.25] 
(8,0) circle [radius=.25] 
(2,0)  circle [radius=.25]  
(6,0) circle [radius=.25] ; 

\draw[fill=black]
(4,0) circle  [radius=.25] ;

\draw[thick,dotted]
(2.25,0) -- (3.75,0)
(4.25,0) -- (5.75,0);

\draw[thick]
(.25,0) -- (1.75,0)
(6.25,0) -- (7.75,0);
\end{tikzpicture} & \small $\OO_q(\text{Gr}_{x,n+1})$ & \small quantum Grassmannian  \\

\small $B_n$ &
\begin{tikzpicture}[scale=.5]
\draw
(4,0) circle [radius=.25] 
(2,0) circle [radius=.25] 
(6,0)  circle [radius=.25]  
(8,0) circle [radius=.25] ; 
\draw[fill=black]
(0,0) circle [radius=.25];

\draw[thick]
(.25,0) -- (1.75,0);

\draw[thick,dotted]
(2.25,0) -- (3.75,0)
(4.25,0) -- (5.75,0);

\draw[thick] 
(6.25,-.06) --++ (1.5,0)
(6.25,+.06) --++ (1.5,0);                      

\draw[thick]
(7,0.15) --++ (-60:.2)
(7,-.15) --++ (60:.2);
\end{tikzpicture} & \small $\OO_q(\mathbf{Q}_{2n+1})$ & \small {odd} quantum quadric  \\ 

\small $C_n$& 
\begin{tikzpicture}[scale=.5]
\draw
(0,0) circle [radius=.25] 
(2,0) circle [radius=.25] 
(4,0)  circle [radius=.25]  
(6,0) circle [radius=.25] ; 
\draw[fill=black]
(8,0) circle [radius=.25];

\draw[thick]
(.25,0) -- (1.75,0);

\draw[thick,dotted]
(2.25,0) -- (3.75,0)
(4.25,0) -- (5.75,0);

\draw[thick] 
(6.25,-.06) --++ (1.5,0)
(6.25,+.06) --++ (1.5,0);                      

\draw[thick]
(7,0) --++ (60:.2)
(7,0) --++ (-60:.2);
\end{tikzpicture} &\small   $\OO_q(\mathbf{L}_{n})$ & \small 
quantum Lagrangian Grassmannian    \\ 

\small $D_n$& 
\begin{tikzpicture}[scale=.5]

\draw[fill=black]
(0,0) circle [radius=.25] ;

\draw
(2,0) circle [radius=.25] 
(4,0)  circle [radius=.25]  
(6,.5) circle [radius=.25] 
(6,-.5) circle [radius=.25];

\draw[thick]
(.25,0) -- (1.75,0)
(4.25,0.1) -- (5.75,.5)
(4.25,-0.1) -- (5.75,-.5);

\draw[thick,dotted]
(2.25,0) -- (3.75,0);
\end{tikzpicture} &\small   $\OO_q(\mathbf{Q}_{2n})$ & \small even quantum quadric  \\

\small $D_n$ & 
\begin{tikzpicture}[scale=.5]
\draw
(0,0) circle [radius=.25] 
(2,0) circle [radius=.25] 
(4,0)  circle [radius=.25] ;

\draw[fill=black] 
(6,.5) circle [radius=.25];
\draw
(6,-.5) circle [radius=.25];

\draw[thick]
(.25,0) -- (1.75,0)
(4.25,0.1) -- (5.75,.5)
(4.25,-0.1) -- (5.75,-.5);

\draw[thick,dotted]
(2.25,0) -- (3.75,0);
\end{tikzpicture} &\small   $\OO_q(\textbf{S}_{n})$ & \small quantum spinor variety   \\

\small $E_6$& \begin{tikzpicture}[scale=.5]
\draw
(2,0) circle [radius=.25] 
(4,0) circle [radius=.25] 
(4,1) circle [radius=.25]
(6,0)  circle [radius=.25] ;

\draw
(0,0) circle [radius=.25];
\draw[fill=black] 
(8,0) circle [radius=.25];

\draw[thick]
(.25,0) -- (1.75,0)
(2.25,0) -- (3.75,0)
(4.25,0) -- (5.75,0)
(6.25,0) -- (7.75,0)
(4,.25) -- (4, .75);
\end{tikzpicture}

 &\small  $\OO_q(\mathbb{OP}^2)$ & \small  quantum Caley plane   \\
\small $E_7$& 
\begin{tikzpicture}[scale=.5]
\draw
(0,0) circle [radius=.25] 
(2,0) circle [radius=.25] 
(4,0) circle [radius=.25] 
(4,1) circle [radius=.25]
(6,0)  circle [radius=.25] 
(8,0) circle [radius=.25];

\draw[fill=black] 
(10,0) circle [radius=.25];

\draw[thick]
(.25,0) -- (1.75,0)
(2.25,0) -- (3.75,0)
(4.25,0) -- (5.75,0)
(6.25,0) -- (7.75,0)
(8.25, 0) -- (9.75,0)
(4,.25) -- (4, .75);
\end{tikzpicture} &\small   $\OO_q(\textbf{F})$ 
& \small   quantum Freudenthal variety   \\
\hline
\end{tabular}
 }
\end{table}
\end{center}

For a diagram of rank~$r$, to the black node $\alpha_x$ we associate the set $S := \{\alpha_1,\ldots,\alpha_r\}\backslash\{\alpha_x\}$, with corresponding Levi subgroup $L_S$. The irreducible quantum flag manifold is then given by the coinvariant subspace $\mathcal{O}_q(G/L_S) \sseq \OO_q(G)$. Note that any automorphism of a Dynkin diagram results in an isomorphic quantum flag manifold, which is not denoted in the diagram. In particular, for the case of $D_n$ and $E_6$, colouring the second spinor node, and the first node, respectively, produces an isomorphic copy of the corresponding quantum flag manifold.

We present an explicit description of the canonical line modules (see Table \ref{table:CQFMsEk}) of the irreducible quantum flag
manifolds using the approach of Theorem~\ref{thm:HKFano}. All line modules are indexed by the negative integers, and hence are negative in the
sense of Definition~\ref{defn:positiveNegativeVB}. The values coincide with their classical counterparts, see for example, \cite[\S~II.4]{JantzenRepBook}. This allows us to conclude in Theorem~\ref{thm:HKFano} that the K\"ahler structure of each irreducible quantum flag manifold is of Fano type. 

\begin{center}
  \begin{table}[ht]
\captionof{table}{\small{Irreducible Quantum Flag Manifolds: CQGA-homogeneous space symbol, corresponding Heckenberger--Kolb calculus complex dimension, and the space of top holomorphic forms identified as a line module}} \label{table:CQFMsEk}
{\small \renewcommand{\arraystretch}{1.8}%

\begin{tabular}{|c|c|c| }
  \hline
~~  $\OO_q(G/L_S)$ ~~ & ~~ $M := \mathrm{dim}\!\left(\Omega^{(1,0)}\right)$ ~~& ~~Canonical line module $\Omega^{(M,0)}$  ~~ \\ 
  \hline
  $\OO_q(\text{Gr}_{s,n+1})$ & $s(n\!-\!s\!+\!1)$ & $\EE_{-(n+1)}$ 
 \\ 
  $\OO_q(\mathbf{Q}_{2n+1})$ &  $2n-1$ & $\EE_{-2n+1}$  \\ 
  $\OO_q(\mathbf{L}_{n})$ &  $\frac{n(n+1)}{2}$  & $\EE_{-(n+1)}$ \\
  $\OO_q(\mathbf{Q}_{2n})$ &  $2(n-1)$ & $\EE_{-2(n-1)}$\\ 
  $\OO_q(\textbf{S}_{n})$ &  $\frac{n(n-1)}{2}$ & $\EE_{-2(n-1)}$\\
  $\OO_q(\mathbb{OP}^2)$ & 16 & $\EE_{-12}$\\
  $\OO_q(\textbf{F})$  & 27 & $\EE_{-18}$\\
\hline
\end{tabular}
}
\end{table}
\end{center}
\begin{remark}
By a theorem of Atiyah, a $2m$-dimensional compact Hermitian manifold is spin if and only if its canonical line module $\Om^{(m,0)}$ admits a holomorphic square root \mbox{\cite[Proposition 3.2]{AtiyahSpinSurfaces}}. Thus from Table~\ref{table:CQFMsEk}  we see that the classical Grassmannians $\mathrm{Gr}_{s,n+1}$, and the classical Lagrangian Grassmannians $\mathbf{L}_n$, are spin for all $n \in 2\mathbb{Z}_{> 0} + 1$. Moreover, the even quadrics $\mathbf{Q}_{2n}$, and the spinor varieties $\mathbf{S}_n$, are spin, for all $n \in \mathbb{Z}_{> 0}$. For the exceptional cases, both the Cayley plane and the Freudenthal variety are spin. Atiyah's theorem suggests a definition for noncommutative Hermitian spin structures with a substantial ready-made family of noncommutative examples. 
\end{remark}


\end{document}